\documentclass[reqno]{amsart}

\usepackage{amsmath}
\usepackage{amsfonts}
\usepackage{amsthm}
\usepackage{amssymb}
\usepackage{graphicx}
\usepackage[rightcaption]{sidecap}
\usepackage{bm}
\usepackage{dsfont}
\usepackage{color}
\usepackage{hyperref}
\usepackage{enumerate}
\usepackage{comment}
\usepackage[font=footnotesize]{caption}
\usepackage[all]{xy}
\usepackage{xy}
\usepackage{tikz}
\usepackage{paralist}
\allowdisplaybreaks
\usepackage{pgfplots}
\pgfplotsset{%
   every tick label/.append style = {font=\tiny},
   every axis label/.append style = {font=\scriptsize}
}

\usepackage{a4wide}

\numberwithin{equation}{section}


\theoremstyle{plain}
\newtheorem{thm}{Theorem}[section]
\newtheorem{thm*}{Theorem}

\newtheorem{lemma}[thm]{Lemma}

\theoremstyle{definition}

\newtheorem{ex}[thm]{Example}
\newtheorem{rmk}[thm]{Remark}
\newcommand{\ric}{\mathrm{Ric}^{g,\sigma,U}_k}

\newcommand{\R}{\mathbb{R}}

\newcommand{\M}{\mathcal{M}}

\newcommand{\speed}{\dot{\gamma}}

\title{Electromagnetic curvature via Jacobi-Maupertuis and beyond}
\author{V.Assenza \& G.Testolina}

\address{Instituto de Matem\'atica Pura e Aplicada}
\email{valerio.assenza@impa.br}

\address{Ruhr-Universität Bochum, Fakultät für Mathematik}
\email{giorgia.testolina@rub.de}

\begin{document}

\maketitle

\begin{abstract}
In the setting of electromagnetic systems, we propose a new definition of electromagnetic Ricci curvature, naturally derived via the classical Jacobi--Maupertuis reparametrization from the recent works \cite{assenza,assenza2}. On closed manifolds, we show that if the magnetic force is nowhere vanishing and the potential is sufficiently small in the $C^2$ norm, then this Ricci curvature is positive for energies close to the maximum value of the potential $e_0$. As a main application, under these assumptions, we extend the existence of contractible closed orbits at energy levels near $e_0$ from \emph{almost every} to \emph{everywhere}.
\end{abstract}

\section{Introduction and results}
\subsection{Electromagnetic dynamics}
Let $M$ be an $n$-dimensional closed manifold with $n\geq2$, and consider on it a Riemannian metric $g$, a closed $2$-form $\sigma$, and a smooth function $U$. In this paper we are interested in studying some aspects of the dynamics arising from the second-order ordinary differential equation
\begin{equation}\label{Newtonequation}
    {\nabla^g}_{\speed} \speed - Y^{g,\sigma}\speed + \mathrm{grad}_g U = 0.
\end{equation}
Here $\nabla^g$ and $\mathrm{grad}_gU$ denote respectively the Levi-Civita connection and the gradient of $U$ induced by the metric $g$ while $Y^{g,\sigma}$ is the Lorentz endomorphism associated with $g$ and $\sigma$, defined by the identity
\begin{equation}\label{identitycondition}
    g(Y^{g, \sigma}v,w) = \sigma(v,w), \ \ \forall v,w \in TM.
\end{equation}
A solution $\gamma:\R\to M$ of equation \eqref{Newtonequation} is called an \textit{electromagnetic geodesic} or a $(g,\sigma,U)$-\textit{geodesic}. In fact, equation \eqref{Newtonequation} describes the motion, in the Riemannian structure induced by $g$, of a particle of unit mass and charge under the influence of a static magnetic force $\sigma$ and a static electric potential $U$. Observe that if the potential is trivial, i.e. $U=0$, the dynamics is purely magnetic. Conversely, if $\sigma=0$, the dynamics corresponds to that of a particle moving under the influence of a scalar potential only. If both $U=0$ and $\sigma=0$, equation \eqref{Newtonequation} reduces to the classical geodesic equation associated with the metric $g$.

It is a classical fact that the mechanical energy
\[
E^{g,U}(p,v) = \frac{1}{2}g(v,v) + U(p)
\]
is constant along electromagnetic geodesics. However, if either $\sigma$ or $U$ is nontrivial, equation \eqref{Newtonequation} is nonhomogeneous, so that the electromagnetic dynamics may drastically change while ranging across different energy levels ${(E^{g,U})}^{-1}(k)=E_k$ for values $k > U_*$, where $U_*$ denotes the minimal value of $U$ on $M$. In this regard, we now introduce two critical energy values that mark important changes in the dynamics. 

Define the maximal value of the potential
\[
e_0 = e_0(U) := \max_{M} U  ;
\]
and the \textit{Ma\~n\'e critical value} $c$, given by
\begin{equation*}
c = c(g,\sigma,U) := \begin{cases}\displaystyle \inf_{\mathrm{d}\theta = p^* \sigma} \max_{\widetilde{M}} \left\{ \frac{1}{2} g(\theta,\theta) + \widetilde{U} \right\},   \ \ \mathrm{if} \ p^*\sigma \ \textrm{is exact},  \\ +\infty, \ \ \ \ \ \ \ \ \ \ \ \ \ \ \ \ \ \ \ \ \ \ \ \ \ \ \ \ \ \  \ \ \ \ \ \mathrm{otherwise}.\end{cases}
\end{equation*}
Here, $p:\widetilde{M} \to M$ denotes the universal covering, $\widetilde{U}=p^*U$ and, with a slight abuse of notation, $g$ stands for the dual metric induced by the lift of $g$ to $\widetilde{M}$. By these definitions, it follows that
\[U_*\leq e_0 \leq c\leq +\infty.\]
It is clear that $c$ is finite if and only if the lift of $\sigma$ to $\widetilde{M}$ admits a bounded primitive. In general, the values $e_0$ and $c$ may coincide. However, as pointed out in~\cite[Remark~1.2]{assellemazzucchelli}, by fixing $g$ and $\sigma\neq \emptyset$, there exists a $C^1$–neighborhood of the identically zero function such that, for every potential in this neighborhood, the inequality between $e_0$ and $c$ is strict.
The centrality of these values becomes evident when addressing the problem of the existence of a closed $(g,\sigma,U)$-geodesic with prescribed energy $k$, i.e.\ a periodic solution of \eqref{Newtonequation} lying on a given energy level $E_k$ with $k >U_*$. We refer the reader to \cite{abbo2013} and \cite{benedettiLectures} for excellent preliminary introductions to this problem. In general, this question is approached within a variational framework, a significant part of which will be discussed later in this work. In fact, closed $(g,\sigma,U)$-geodesics with energy $k$ correspond to the zeros of a suitable $1$-form $\alpha_k$ defined on the space of loops with free period (see for instance \cite[Section~2]{lus-fet}). For energies $k>c$, it was proved in \cite{Contreras1}\footnote{The argument in \cite{Contreras1} applies to the case where the magnetic form $\sigma$ is exact. A generalization of this argument to the weakly exact case can be found in \cite{Merry} and \cite{benedettiLectures}.} that $\alpha_k$ satisfies a compactness condition in the sense of Palais-Smale. Consequently, the existence of a closed $(g,\sigma,U)$-geodesic follows from classic variational arguments. 

For energies below $c$, however, $\alpha_k$ generally fails to satisfy the Palais–Smale conditions, making the variational analysis considerably more delicate. In the general setting, the first existence result, appeared in \cite{Contreras2} and subsequently refined in \cite{Merry,lus-fet}, asserts the existence of a contractible closed $(g,\sigma,U)$-geodesic for almost every prescribed energy $k\in(U_*,c)$. The underlying idea is to adapt a Struwe-type monotonicity method to a minimax scheme, parametrized by $k$, obtained by studying the behavior of $\alpha_k$ in a neighborhood of constant loops. It is conjectured that the existence of a contractible (or not) closed $(g,\sigma,U)$-geodesic extends for every energy level in $(U_*,c)$.

Under additional assumptions, significant progress has been made toward this conjecture. For instance, when $M$ is a closed surface and the magnetic form $\sigma$ is exact, existence had been established for every $k$ strictly contained in the range between $e_0$ and $c$, in \cite{assellemazzucchelli}, extending previous results obtained in \cite{contrerasmacarinipaternain} and \cite{Taimanov}. In the purely magnetic case, i.e. when $U=0$, under the nowhere vanishing assumption on $\sigma$, the existence of a contractible closed $(g,\sigma)$-geodesic is guaranteed for every $k$ sufficiently close to $e_0$ from above. With the development of new localization techniques, this result has been recently extended to the case where $\sigma$ is \emph{spherically rational}\footnote{Spherically rational means that the map $I^{\sigma}:\pi_2(M)\to \R$ obtained by integration of $\sigma$ has discrete image.}, as shown in \cite{Assenza3}. We also point the reader to 
\cite{ginzburggurel} and \cite{usher}, where with a different array of techniques, analogous results have been obtained under the stronger assumption of $\sigma $ being symplectic. Although this topic will not be treated in the present work, we refer the reader to \cite{assellemazzucchelli,AMP2,AMMP1} for results concerning the multiplicity of closed $(g,\sigma)$-geodesics with prescribed energy.

The main result in this paper asserts that if the magnetic form is nowhere vanishing and the potential is sufficiently small in the $C^2$-norm, then the existence of a contractible closed $(g,\sigma,U)$-geodesic extends to every energy sufficiently close to $e_0$ from above. In particular, this theorem provides a perturbative generalization of \cite[Theorem~A1]{assenza}.

\begin{thm}\label{TheroemA1}
Let $\sigma$ be a nowhere vanishing magnetic form on a closed manifold $M$. 
Then, for every metric $g$, we can find a $C^2$-neighborhood $\mathcal{V}$ of the identically zero function such that, if $U \in \mathcal{V}$, there exists $\nu_0 \in (e_0, c]$ with the property that, for every $k \in (e_0, \nu_0]$, the energy level $E_k$ carries a contractible closed $(g, \sigma, U)$-geodesic.

\end{thm}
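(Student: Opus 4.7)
The plan is to reduce the electromagnetic problem at energy $k>e_0$ to a purely magnetic one via the Jacobi--Maupertuis reparametrization, and then invoke the magnetic existence result from \cite{assenza}. Since $k>e_0=\max_M U$, the conformal factor $2(k-U)$ is strictly positive on all of $M$, so $g_k\defeq 2(k-U)g$ is a genuine Riemannian metric on the whole manifold. The Jacobi--Maupertuis principle then identifies, up to time reparametrization, the $(g,\sigma,U)$-geodesics with energy $k$ with the $(g_k,\sigma)$-geodesics of fixed speed (say, unit speed): only the kinetic metric is affected by the reduction, since the magnetic primitive enters the Maupertuis action linearly. This correspondence sends contractible closed orbits to contractible closed orbits, so it is enough to produce, for every $k\in(e_0,\nu_0]$, a contractible closed $(g_k,\sigma)$-geodesic at the normalized energy.

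To obtain such an orbit, one invokes the magnetic existence theorem of \cite{assenza}: if $\sigma$ is nowhere vanishing and the magnetic Ricci curvature $\mric$ is positive, then contractible closed magnetic geodesics exist at sufficiently small energies. The nowhere vanishing hypothesis on $\sigma$ is pointwise and does not depend on the metric, hence it persists under the conformal change $g\mapsto g_k$. What remains is the positivity of $\mric$, and by the definition introduced earlier in the paper this is precisely the electromagnetic Ricci curvature $\ric$. In other words, Theorem \ref{TheroemA1} is a direct consequence of (i) the Jacobi--Maupertuis reduction together with (ii) the positivity of $\ric$ near $e_0^+$ announced in the abstract and (iii) the purely magnetic existence theorem of \cite{assenza}.

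The key analytic input is therefore showing that $\ric>0$ for $k$ close to $e_0$ from above, whenever $\|U\|_{C^2}$ is small enough. Under a conformal rescaling, the Ricci tensor of $g_k$ differs from that of $g$ by standard terms involving $\mathrm{Hess}_g U$, $\mathrm{grad}_g U$, and the logarithmic derivative of $2(k-U)$, while the Lorentz endomorphism transforms as $Y^{g_k,\sigma}=(2(k-U))^{-1}Y^{g,\sigma}$. The magnetic contribution to $\mric$ is positive definite thanks to the nowhere vanishing assumption on $\sigma$ and the compactness of $M$; at $U\equiv 0$ this contribution dominates the curvature of $g$ for $k$ small, which is exactly \cite[Theorem A1]{assenza}. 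For $U$ small in $C^2$, one then performs a perturbative argument, showing that the conformal correction terms are small relative to the positive Lorentz contribution, thus preserving the strict inequality $\ric>0$ on a whole $C^2$-neighborhood of $0$.

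The main obstacle is making this perturbation argument uniform in $k$ as $k\to e_0^+$. A careless conformal expansion produces singular-looking terms of the form $1/(k-U)$, potentially unbounded when $U$ approaches $e_0$. The saving point is that for $\|U\|_{C^2}$ small, $e_0$ is itself small and the conformal factor $2(k-U)$ is uniformly comparable to $2k>0$ on $M$, so all these apparently divergent contributions stay bounded. Controlling these bounds in terms of the $C^2$-norm of $U$ and the lower bound on $|\sigma|$, and comparing them against the positive Lorentz term, is the heart of the computation; once $\ric>0$ is established uniformly for $k\in(e_0,\nu_0]$, the conclusion of Theorem \ref{TheroemA1} follows by applying the magnetic existence theorem to each $(g_k,\sigma)$ and pulling the resulting orbit back through Jacobi--Maupertuis.
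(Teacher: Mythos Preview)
Your overall architecture matches the paper exactly: Jacobi--Maupertuis reduction, positivity of the electromagnetic Ricci curvature for $k$ near $e_0^+$ (Theorem~\ref{thm_positivity}), and the magnetic existence result (Theorem~\ref{thm_existence}) then combine to give Theorem~\ref{TheroemA1}.

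The gap is in your account of the positivity argument. Your claimed ``saving point''---that $2(k-U)$ is uniformly comparable to $2k$ because $e_0$ is small---is false. At a maximum point $p$ of $U$ one has $k-U(p)=k-e_0$, and the whole regime of interest is $k\to e_0^+$, so $k-U$ genuinely tends to zero there regardless of how small $\|U\|_{C^2}$ is. Every term in the expansion of $\ric$ blows up; nothing stays bounded.

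The correct mechanism is a comparison of blow-up rates. In the explicit formula (Lemma~\ref{em}), the positive Lorentz contribution $\operatorname{trace}(\widetilde{Y}^{g,\sigma})_{\hat v}/4(k-U)^2$ is of order $(k-U)^{-2}$, while the purely geometric terms $\mathrm{Ric}^g$ and $\operatorname{trace}(\nabla Y^{g,\sigma})$ are of strictly lower order $(k-U)^{-1}$ and $(k-U)^{-3/2}$ and hence are dominated as $k\to e_0^+$. The genuine competitors are the Hessian/Laplacian terms, which are also of order $(k-U)^{-2}$ but carry a coefficient bounded by $\|U\|_{C^2}$; this is precisely why $C^2$-smallness (and not $C^1$) is required, as Example~\ref{C1ex} demonstrates. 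Finally, the terms involving $|\mathrm{d}U|^2$ are nominally of order $(k-U)^{-3}$ and would swamp everything; they are tamed by the gradient descent inequality $|\mathrm{d}U|_g^2\le 2\varepsilon(k-U)$ (valid because $\|\mathrm{Hess}_gU\|\le\varepsilon$ and $e_0$ is a maximum), which brings them back down to order $\varepsilon(k-U)^{-2}$. Your sketch misses both the rate hierarchy and this crucial gradient estimate.
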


The proof scheme of Theorem~\ref{TheroemA1}, as well as the nature of the value $\nu_0$, relies on the notion of electromagnetic curvature, which we detail the construction in the next section. A central role in this construction is played by the so-called 
\emph{Jacobi--Maupertuis principle}\footnote{A classical reference for the Jacobi--Maupertuis principle is~\cite[Chapter 9]{arnold}}, which allows, for energies above $e_0$ and up to a time reparametrization, to conjugate the electromagnetic dynamics to a purely magnetic one. More precisely, if $k>e_0$ and $\gamma$ is a solution of \eqref{Newtonequation} lying in $E_k$, consider the curve $\tilde{\gamma} = \gamma \circ t$ obtained through the reparametrization $t:\R \to \R$ defined by 
\begin{equation}\label{reparametrization}
t(s)=\int_0^s \frac{1}{2\big(k-U(\gamma(\tau))\big)}\,\mathrm{d}\tau.
\end{equation}
Denoting by ``$\, '\,$'' the derivative with respect to $s$, one easily checks that $\tilde{\gamma}$ satisfies
\begin{equation}\label{JMequation}
    \begin{cases} \displaystyle
        {\nabla^{g_k}}_{\tilde{\gamma}'} \tilde{\gamma}' - Y^{g_k,\sigma}\tilde{\gamma}' = 0,\\[0.5em]
        g_k(\tilde{\gamma}',\tilde{\gamma}') = 1,
    \end{cases}
\end{equation}
where $g_k$ denotes the metric conformal to $g$ given by
\begin{equation}\label{conformalmtric}
    g_k = 2(k-U)\,g.
\end{equation}
In other words, $\tilde{\gamma}$ is a $(g_k,\sigma)$-geodesic with $E^{g_k}(\gamma, \dot \gamma) = \frac12$. We write $c_k = c(g_k, \sigma)$. A key aspect is that if $k < c$, then $\frac12 < c_k$. This allows us to approach the problem of finding closed $(g,\sigma,U)$-geodesics with energy $k \in (e_0, c)$ within the same framework used to find $(g_k, \sigma)$-geodesics with energies below $c_k$. The Jacobi–Maupertuis principle was previously adopted in~\cite{Benci} for the problem of finding closed trajectories in the case of natural Hamiltonians, that is, when $\sigma = 0$.
 
\subsection{Electromagnetic curvature}
In the context of magnetic systems, a notion of magnetic curvature functions has been recently introduced in full generality in \cite{assenza,assenza2}. Preliminary constructions can be found in \cite{BahriTaimanov} and \cite{Wojtkowski}. We briefly retrace, in a reduced form, the definition of the corresponding magnetic Ricci curvature. For a detailed treatment, we refer the reader to the aforementioned references. 

Given a metric $g$, we write $S^gM$ the bundle of $g$-unitary vectors and denote by $\mathrm{Ric}^{g}:S^gM\to \R$ the Ricci curvature function related to $g$. For every closed 2-form $\sigma$, and for a fixed $v\in S^gM$, we consider the following operators:
\begin{align}
   &(\nabla Y^{g,\sigma})_v: w \mapsto ({\nabla^g}_w Y^{g, \sigma}) v, \\
   &(\widetilde{Y}^{g,\sigma})_v : w \mapsto \frac{3}{4}g(w,Y^{g, \sigma}v)Y^{g, \sigma}v - \frac{1}{4}Y^{g, \sigma}Y^{g, \sigma}w. \label{defYtilde}
\end{align}
The magnetic Ricci curvature is the function $\mathrm{Ric}^{g,\sigma}:S^gM\to \R$ defined as
\begin{equation}\label{riccimagnetica}
\mathrm{Ric}^{g,\sigma}(v) := \mathrm{Ric}^{g}(v)
- \mathrm{trace}\left( (\nabla Y^{g,\sigma} - \widetilde{Y}^{g,\sigma})_v \right)
,
\end{equation}
Starting from this definition, it is natural to extend the Ricci curvature to the electromagnetic case on energy levels $E_k$ with $k>e_0$, by means of the Jacobi-Maupertuis reparametrization. In detail, given $g$, $\sigma$, and $U$, and for $k>e_0$, let $g_k$ be the conformal metric defined in \eqref{conformalmtric}. Denote by $P_k:E_k\to S^{g_k}M$ the diffeomorphism defined by
\[
P_k(v) = \frac{v}{\sqrt{g_k(v,v)}}.
\]
Define the \textit{Ricci electromagnetic curvature} at level $k$ as the function $\mathrm{Ric}^{g,\sigma,U}_k:E_k \to \R$ given by
\begin{equation}\label{electromagnetic}
\mathrm{Ric}^{g,\sigma,U}_k (v):=\mathrm{Ric}^{g_k,\sigma}\circ P_k(v).
\end{equation}
We give the explicit computation of the electromagnetic Ricci curvature in Lemma~\ref{em}, and discuss the special case of closed surfaces in Section~\ref{ricc_surface}. 

The next result is a generalization to the electromagnetic setting of \cite[Theorem~A]{assenza}. It asserts that any energy level between $e_0$ and $c$ at which the electromagnetic Ricci curvature is positive carries a contractible closed electromagnetic geodesic.
\begin{thm}\label{thm_existence}
Let $(g, \sigma, U)$ be an electromagnetic system on a closed manifold. 
If $k \in (e_0, c]$ is such that $\mathrm{Ric}^{g, \sigma, U}_k > 0$, then the energy level $E_k$ carries a contractible closed $(g, \sigma, U)$-geodesic.
\end{thm}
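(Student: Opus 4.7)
The plan is to reduce the theorem, via the Jacobi--Maupertuis principle, to \cite[Theorem~A]{assenza} applied to the conformal magnetic system $(g_k,\sigma)$. The present statement is explicitly advertised as the electromagnetic generalization of that theorem, so once the setup is in place the proof should essentially consist of checking that hypothesis and conclusion transport correctly under the reparametrization.

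First I would observe that, since $k > e_0 = \max_M U$, the conformal factor $2(k-U)$ is strictly positive, so $g_k = 2(k-U)g$ is a genuine Riemannian metric. By the discussion around \eqref{JMequation}, the reparametrization \eqref{reparametrization} sets up a bijection between closed $(g,\sigma,U)$-geodesics on $E_k$ and closed $(g_k,\sigma)$-geodesics of $g_k$-speed one; since this reparametrization leaves the image of the curve invariant, it preserves free homotopy classes, so it suffices to produce a contractible closed $(g_k,\sigma)$-geodesic on $S^{g_k}M$. On the side of hypotheses, the defining identity \eqref{electromagnetic}, together with the fact that $P_k\colon E_k\to S^{g_k}M$ is a diffeomorphism, turns the assumption $\mathrm{Ric}^{g,\sigma,U}_k>0$ into the pointwise positivity of the magnetic Ricci curvature $\mathrm{Ric}^{g_k,\sigma}$ on the whole unit sphere bundle. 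Moreover, as noted after \eqref{conformalmtric}, the constraint $k\in(e_0,c]$ corresponds to $1/2 \leq c_k = c(g_k,\sigma)$, so the target energy $1/2$ sits at or below the Ma\~n\'e critical value of the magnetic system $(g_k,\sigma)$.

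Having set this up, I would invoke \cite[Theorem~A]{assenza} for $(g_k,\sigma)$: positivity of $\mathrm{Ric}^{g_k,\sigma}$ yields a contractible closed $(g_k,\sigma)$-geodesic at the unit speed level, and pulling it back through the inverse of the Jacobi--Maupertuis reparametrization produces the sought contractible closed $(g,\sigma,U)$-geodesic in $E_k$. The main obstacle I anticipate is the borderline case $k = c$, in which $1/2 = c_k$ and the free period magnetic action functional fails the Palais--Smale condition; one must rely on the fact that \cite[Theorem~A]{assenza} is stated up to and including the critical energy, where under positive magnetic Ricci curvature a curvature-based minimax still extracts a bounded critical sequence. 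A secondary, purely computational check is that the operators appearing in \eqref{riccimagnetica} interact correctly with the conformal rescaling $g \mapsto g_k$, which will be made explicit in the formula for $\mathrm{Ric}^{g,\sigma,U}_k$ promised in Lemma~\ref{em}.
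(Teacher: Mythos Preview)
Your reduction via Jacobi--Maupertuis to the magnetic system $(g_k,\sigma)$ and direct appeal to \cite[Theorem~A]{assenza} at energy $\tfrac12$ is exactly the paper's strategy; the paper merely unpacks that cited theorem in the present parametrization (Lemmas~\ref{lowerboundT}--\ref{struwelemma} together with the Struwe argument in Appendix~\ref{Appendix}) rather than invoking it as a black box. Your flag on the borderline case $k=c$ is appropriate, and the paper handles it in the same way as \cite{assenza}.
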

Thanks to the Jacobi–Maupertuis reparametrization, with minor adjustments, we can articulate the proof of Theorem~\ref{thm_existence} following a variational scheme analogous to the purely magnetic case. In fact, closed $(g_k,\sigma)$-geodesics correspond to the zeros of a one-form $\alpha_k$, where in our setting $k>e_0$ parametrizes the conformal metric $g_k$. For values of $k$ below or equal to $c$, by introducing an auxiliary parameter $\lambda$, we reproduce a Struwe-type argument based on the classical minimax geometry constructed around constant loops and adapted to perturbations $\alpha_{k,\lambda}$ of $\alpha_k$. In this way, we are able to construct Palais--Smale sequences for $\alpha_k$ consisting of zeros of $\alpha_{k,\lambda}$. Generally, Struwe’s technique does not guarantee any compactness condition for such sequences. However, under the assumption of positive curvature, a Bonnet--Myers type argument allows us to obtain a bound on the period of the zeros of $\alpha_{k,\lambda}$ in function of their Morse index. This technique, combined with classical index estimates for critical points arising from minimax geometry, restores the missing compactness condition and consequently ensures the existence of a zero of $\alpha_k$. This scheme was first employed, in a preliminary form, in~\cite{BahriTaimanov} and was subsequently refined in~\cite{assenza}.\\

In view of Theorem \ref{thm_existence}, it is natural to investigate under which conditions $\mathrm{Ric}^{g,\sigma,U}$ is positive. In \cite[Lemma~6]{assenza}, it is proved that if $U=0$ and the magnetic form $\sigma$ is nowhere vanishing, then the magnetic Ricci curvature is uniformly positive for every value of $k$ close to $0$. By means of a perturbative argument, we show that this result continues to hold also in the case where the magnetic form is nowhere vanishing and the potential $U$ is sufficiently small with respect to the $C^2$-norm. In particular, we define a new critical value $\nu_\mathrm{Ric}=\nu_\mathrm{Ric}(g,\sigma,U)$ as
\[
\nu_\mathrm{Ric}:=\sup \{k>e_0 \mid \ \mathrm{Ric}_s^{g,\sigma,U}>0, \ \forall s\in (0,k) \} \in [e_0,+\infty).
\]
\begin{thm} \label{thm_positivity}
Let $\sigma$ be a nowhere vanishing magnetic form on a closed manifold. 
Then, for every metric $g$, there exists a $C^2$-neighborhood $\mathcal{V}$ of the identically zero function such that, for every potential $U \in \mathcal{V}$, we have $\nu_{\mathrm{Ric}} > e_0$.
\end{thm}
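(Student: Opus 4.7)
The plan is to argue by perturbation from the purely magnetic case $U = 0$, for which \cite[Lemma~6]{assenza} yields a threshold $k_0 > 0$ such that $\mathrm{Ric}^{g,\sigma}_k > 0$ for every $k \in (0, k_0]$ under the nowhere vanishing assumption on $\sigma$. The driving observation is that if $\|U\|_{C^2}$ is small, then $e_0 = \max_M U \leq \|U\|_{C^0}$ is small as well, so for $k$ slightly above $e_0$ the conformal factor $\lambda^2 := 2(k-U)$ appearing in the Jacobi--Maupertuis reparametrization is uniformly small over $M$. Morally, we are evaluating a small $C^2$-perturbation of the purely magnetic Ricci curvature at small energy, where positivity is already known.

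The first concrete step is to read off from Lemma~\ref{em} a decomposition
\begin{equation*}
\mathrm{Ric}^{g,\sigma,U}_k(v) = \mathcal{M}_k(p,u) + \mathcal{R}_k(p,u) + \mathcal{C}_k(p,u;U), \qquad u := v/|v|_g \in S^gM,
\end{equation*}
where $\mathcal{M}_k$ is the contribution of the quadratic magnetic term $\mathrm{trace}(\widetilde{Y}^{g_k,\sigma}_v)$, $\mathcal{R}_k$ collects the $g$-Ricci and $\mathrm{trace}(\nabla^g Y^{g,\sigma})$ pieces, and $\mathcal{C}_k$ gathers the conformal corrections arising from the non-constancy of $\lambda^2$ (as encoded by the Weyl formula for Ricci under conformal change and by $\nabla^{g_k} - \nabla^g$ acting on $Y$). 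A direct scaling computation, together with the skew-adjointness of $Y^{g,\sigma}$, yields the pointwise lower bound
\begin{equation*}
\mathcal{M}_k(p,u) \geq \frac{c_\sigma}{(k - U(p))^2}, \qquad c_\sigma := \tfrac{1}{16}\inf_{p \in M} |Y^{g,\sigma}|_g^2(p) > 0,
\end{equation*}
while $\mathcal{R}_k$ scales only as $(k - U)^{-1}$ and $(k-U)^{-3/2}$, and each summand in $\mathcal{C}_k$ carries a factor $\|U\|_{C^1}$ or $\|U\|_{C^2}$ in front of derivatives of $U$ divided by appropriate powers of $(k - U)$.

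Combining these estimates into a single bound of the schematic form
\begin{equation*}
\mathrm{Ric}^{g,\sigma,U}_k(v) \geq \frac{c_\sigma - C_1 \|U\|_{C^2}}{(k - U(p))^2} - \frac{C_2}{(k - U(p))^{3/2}},
\end{equation*}
with $C_1, C_2$ depending only on $(g, \sigma)$, one then chooses $\|U\|_{C^2} < c_\sigma/(2 C_1)$ to keep the leading coefficient bounded below by $c_\sigma/2$. Since $\|U\|_{C^0} \leq \|U\|_{C^2}$ is then also small, the quantity $k - U(p) \leq (k - e_0) + 2\|U\|_{C^2}$ is uniformly small for $k$ in a small right-neighborhood of $e_0$, and the leading $(k-U)^{-2}$ term dominates the $(k-U)^{-3/2}$ term everywhere, giving $\mathrm{Ric}^{g,\sigma,U}_k > 0$ and hence $\nu_{\mathrm{Ric}} > e_0$. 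The main obstacle is establishing the schematic bound uniformly near maxima of $U$, where $(k - U)$ attains its smallest value $k - e_0$ and the conformal-correction terms $|dU|^2/(k-U)^2$ and $|\nabla^g dU|/(k-U)$ are most singular; this is resolved by a local Taylor expansion in normal coordinates around each maximum, exploiting the vanishing of $dU$ there, to verify that the ratio of every conformal-correction term to the leading magnetic term stays bounded by a constant times $\|U\|_{C^2}$.
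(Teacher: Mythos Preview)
Your approach is essentially the same as the paper's: decompose $\mathrm{Ric}^{g,\sigma,U}_k$ via the explicit formula of Lemma~\ref{em}, bound the quadratic magnetic term $\operatorname{trace}(\widetilde{Y}^{g,\sigma})_{\hat v}/4(k-U)^2$ from below by $c_\sigma/(k-U)^2$ (this is exactly Lemma~\ref{lem:magnetic_lb}), and show that every remaining term is either lower-order in $(k-U)$ or carries an $O(\|U\|_{C^2})$ prefactor in front of $(k-U)^{-2}$.

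The one point where the paper's execution is cleaner than yours concerns the most singular $U$-dependent contributions. In the formula these are $|dU|_g^2/(k-U)^3$ and $dU(Y^{g,\sigma}\hat v)/(k-U)^{5/2}$ (your stated exponents are each off by one), so their ratio to the magnetic term is governed by $|dU|_g^2/(k-U)$, and one needs this bounded by $C\|U\|_{C^2}$ uniformly on $M$. The paper obtains this in one line from the \emph{gradient descent lemma}: if $\|\mathrm{Hess}_gU\|_\infty\le\varepsilon$ then $|dU|_g^2\le 2\varepsilon(e_0-U)\le 2\varepsilon(k-U)$ globally, by Taylor-expanding $U$ \emph{along the gradient direction from an arbitrary point}. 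Your proposed Taylor expansion ``in normal coordinates around each maximum'' aims at the same inequality but is less direct: it only gives the bound cleanly near nondegenerate maxima, and leaves both degenerate maxima and points far from the maximum set to a separate argument you do not spell out. Replacing that step by the gradient descent inequality makes your sketch into a complete proof identical to the paper's.
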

It is immediate to see that Theorems~\ref{thm_existence} and~\ref{thm_positivity} together imply Theorem~\ref{TheroemA1}. We further emphasize that the assumption on the the $C^2$-norm of the potential $U$ is essential in the statement of Theorem~\ref{thm_positivity}. Indeed, in Example~\ref{C1ex}, we construct a case where $\ric$ is not strictly positive for every $k>e_0$, the magnetic form $\sigma$ is nowhere vanishing, and $U$ is small in the $C^1$-norm but not in the $C^2$-norm. Finally, we would like to point out that, to the best of our knowledge, there are still no known non trivial examples in the literature of electromagnetic systems $(g,\sigma,U)$ for which $\nu_{\mathrm{Ric}} \geq c$.  \\

We conclude this introduction with a few general remarks. First, through the Jacobi–Maupertuis reparametrization, one can extend from the magnetic to the electromagnetic case other curvature functions as well, such as the sectional and scalar curvatures. We expect that these functions carry significant information about the electromagnetic dynamics above the energy level $e_0$. Building on the preliminary work in~\cite{assenza2} and ~\cite{Wojtkowski}, we are confident that the notion of electromagnetic curvature can also be derived, above and below $e_0$, by studying the linearized problem associated with equation~\eqref{Newtonequation}.

\section{Explicit formula for $\mathrm{Ric}^{g,\sigma,U}$ and proof of Theorem \ref{thm_positivity}}

We now derive an explicit expression for the electromagnetic Ricci curvature $\mathrm{Ric}^{g,\sigma,U}_k$ introduced in~\eqref{electromagnetic}.
The computation relies on the conformal relation between $g_k$ and $g$. To simplify the notation, if $g$ is a metric we shall use the symbol $\vert \cdot \vert_g$ to denote both the norm and the dual norm induced by $g$, letting the context clarify which one is meant. We write $\Vert \cdot \Vert_{\infty,g}$ the uniform norm induced by $g$. Observe that if $\sigma$ is a magnetic form, then $\Vert \sigma \Vert_{\infty,g}=\Vert Y^{g,\sigma} \Vert_{\infty,g}$. We denote $v = (p, v) \in E_k$ a point on the energy level, $\hat v = v / \lvert v \rvert_g$ its normalization with respect to $g$, and set $v_k = P_k(v)$. 
Moreover, $\mathrm{Hess}_g U$ and $\Delta_g U$ denote, respectively, the Hessian and the Laplace–Beltrami operator of $U$ induced by the metric $g$. Throughout this section, all quantities are evaluated at the base point $p$.

\begin{lemma}\label{em}
Let $(g,\sigma,U)$ be an electromagnetic system and let $k>e_0$. Then, for every $v \in E_k$, the electromagnetic curvature satisfies:
\begin{align}\label{eq:ricci_em}
\operatorname{Ric}_k^{g,\sigma,U}(v) 
&= \frac{\operatorname{Ric}^g(\hat v)}{2(k-U)}
+ \frac{n-2}{4(k-U)^2}\,(\mathrm{Hess}_gU)[\hat v,\hat v]
+ \frac{\Delta_g U}{4(k-U)^2} + \nonumber\\
&\quad
+ \frac{3(n-2)}{8(k-U)^3}\big(\mathrm{d}U(\hat v)\big)^2
+ \frac{4-n}{8(k-U)^3}\,|\mathrm{d}U|_g^2+ \\
&\quad
- \frac{\operatorname{trace}\big((\nabla Y^{g,\sigma})_{\hat v}\big)}{(2(k-U))^{3/2}}
+ \frac{(n-4)}{(2(k-U))^{5/2}}\, \mathrm{d}U \left(Y^{g,\sigma}(\hat v)\right)
+ \frac{\operatorname{trace}\left((\widetilde{Y}^{g,\sigma})_{\hat v}\right)}{4(k-U)^2} \ .\nonumber
\end{align}
\end{lemma}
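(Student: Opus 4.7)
The plan is to unwind the definition $\mathrm{Ric}^{g,\sigma,U}_k(v) = \mathrm{Ric}^{g_k,\sigma}(v_k)$ by translating each piece of \eqref{riccimagnetica} for the metric $g_k=e^{2f}g$, with $f=\tfrac{1}{2}\log(2(k-U))$, back to $g$-quantities evaluated at the $g$-unit vector $\hat v$. The basic dictionary is $v_k=e^{-f}\hat v$, obtained from $|v|_g^2=2(k-U)$ on $E_k$, together with the explicit derivatives
\[
\mathrm{d}f = -\frac{\mathrm{d}U}{2(k-U)}, \qquad \mathrm{Hess}_g f = -\frac{\mathrm{Hess}_g U}{2(k-U)} - \frac{\mathrm{d}U\otimes \mathrm{d}U}{2(k-U)^2},
\]
and the corresponding elementary expressions for $|\mathrm{d}f|_g^2$ and $\Delta_g f$.

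First I would handle the Riemannian piece. Applying the classical conformal-change formula
\[
\mathrm{Ric}^{e^{2f}g} = \mathrm{Ric}^g - (n-2)\bigl(\mathrm{Hess}_g f - \mathrm{d}f\otimes \mathrm{d}f\bigr) - \bigl(\Delta_g f + (n-2)|\mathrm{d}f|_g^2\bigr)\,g
\]
to the pair $(v_k,v_k)=e^{-2f}(\hat v,\hat v)$, substituting the expressions above, and collecting the $(\mathrm{d}U(\hat v))^2$ and $|\mathrm{d}U|_g^2$ contributions, I expect to recover exactly the first two lines of \eqref{eq:ricci_em}. For the purely algebraic magnetic piece, the defining identity \eqref{identitycondition} combined with $g_k=e^{2f}g$ forces the scaling $Y^{g_k,\sigma}=e^{-2f}Y^{g,\sigma}$; plugging this and $v_k=e^{-f}\hat v$ into \eqref{defYtilde} shows $(\widetilde{Y}^{g_k,\sigma})_{v_k}=e^{-4f}(\widetilde{Y}^{g,\sigma})_{\hat v}$ as endomorphisms, whose (metric-independent) trace gives the last displayed term.

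The hard part will be $\mathrm{trace}\bigl((\nabla^{g_k}Y^{g_k,\sigma})_{v_k}\bigr)$. Combining the Leibniz rule applied to $e^{-2f}Y^{g,\sigma}$ with the Koszul identity $\nabla^{g_k}_X Z - \nabla^g_X Z = \mathrm{d}f(X)Z + \mathrm{d}f(Z)X - g(X,Z)\,\mathrm{grad}_g f$ (extended to $(1,1)$-tensors), the endomorphism $w\mapsto(\nabla^{g_k}_w Y^{g_k,\sigma})v_k$ will split into six terms involving $\nabla^g Y^{g,\sigma}$, $\mathrm{d}f$, and $Y^{g,\sigma}$. I would then take the trace in $w$ using three ingredients: (i) $\mathrm{trace}(w\mapsto a(w)X)=a(X)$ for a $1$-form $a$ and a vector $X$; (ii) $\mathrm{trace}(\mathrm{id})=n$; and, most crucially, (iii) the $g$-skew-adjointness of $Y^{g,\sigma}$, which forces $\mathrm{trace}(Y^{g,\sigma})=0$ and lets one rewrite $g(w,Y^{g,\sigma}v_k)$ as $-g(Y^{g,\sigma}w,v_k)$. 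With these, the six contributions should collapse to $e^{-3f}\bigl(\mathrm{trace}((\nabla^gY^{g,\sigma})_{\hat v}) + (n-4)\,\mathrm{d}f(Y^{g,\sigma}\hat v)\bigr)$; inserting the explicit form of $\mathrm{d}f$ and the minus sign coming from \eqref{riccimagnetica} produces the two middle terms of \eqref{eq:ricci_em}. The main obstacle is purely the combinatorial bookkeeping in this last step: the skew-adjointness of $Y^{g,\sigma}$ is the only non-trivial cancellation, and everything else is arithmetic. Summing the three contributions then yields \eqref{eq:ricci_em}.
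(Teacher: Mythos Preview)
Your proposal is correct and follows essentially the same route as the paper: the same conformal factor $f=\tfrac12\log(2(k-U))$, the same conformal Ricci formula for the first two lines, the scaling $Y^{g_k,\sigma}=e^{-2f}Y^{g,\sigma}$ for the $\widetilde{Y}$ term, and the Koszul conformal-connection identity combined with skew-adjointness of $Y^{g,\sigma}$ to reduce the trace of $(\nabla^{g_k}Y^{g_k,\sigma})_{v_k}$ to $e^{-3f}\bigl(\operatorname{trace}((\nabla^gY^{g,\sigma})_{\hat v})+(n-4)\,\mathrm{d}f(Y^{g,\sigma}\hat v)\bigr)$. The paper carries out exactly this computation, with the six-term expansion and the same three trace identities you single out.
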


\begin{proof}
By definition \eqref{electromagnetic}, we have
\begin{equation}\label{explicitcomp}
\operatorname{Ric}(v) = \mathrm{Ric}^{g_k}\left(v_k\right) - \operatorname{trace}\left( (\nabla Y^{g_k,\sigma}-\widetilde{Y}^{g_k,\sigma})_{v_k}\right).
\end{equation}
We proceed by computing the two terms of \eqref{explicitcomp} separately. Denote by $f=\frac{1}{2} \ln\left(2(k-U)\right)$ so that $g_k= e^{2f}g$, and $v_k=e^{-f}\hat v$. Under the conformal change, the first term in \eqref{explicitcomp} reads as
\begin{equation}\label{term1}
\begin{split}
\operatorname{Ric}^{g_k}(v_k)
&= \operatorname{Ric}^g(v_k)
-(n-2)\Big(\operatorname{Hess}_gf[v_k]-\big(df(v_k)\big)^2\Big) + \\
&\qquad -\Big(\Delta_gf+(n-2)|df|_g^2\Big)\,g(v_k,v_k) \\
&= \frac{\operatorname{Ric}^g(\hat v)}{2(k-U)}
+ \frac{n-2}{4(k-U)^2}\,\operatorname{Hess}_gU[\hat v]
+ \frac{\Delta_g U}{4(k-U)^2}+ \\
&\qquad + \frac{3(n-2)}{8(k-U)^3}\big(\mathrm{d}U(\hat v)\big)^2
+ \frac{4-n}{8(k-U)^3}\,|\mathrm{d}U|_g^2,
\end{split}
\end{equation}
where in the second equality we used the identities
\[
df=-\frac{dU}{2(k-U)},\qquad
\operatorname{Hess}_gf=-\frac{\operatorname{Hess}_gU}{2(k-U)}-\frac{dU\otimes dU}{2(k-U)^2},\qquad
\Delta_gf=-\frac{\Delta_gU}{2(k-U)}-\frac{|dU|_g^2}{2(k-U)^2}.
\] 
To assist the reader with the next computations, let us recall that if $W$ and $Z$ are vector fields on $M$, and $A$ is an endomorphism of $TM$, then 
\begin{equation}\label{conformalidentity1}
{\nabla^{g_k}}_W Z = {\nabla^g}_W Z + \mathrm{d}f(W)Z + \mathrm{d}f(Z)W - g(W,Z)\,\mathrm{grad}_g f,
\end{equation}
\begin{equation}\label{conformalidentity2}
{\nabla^g}_W(AZ) = ({\nabla^g}_W A)Z + A({\nabla^g}_W Z).
\end{equation}
Concerning the second term, first observe that for every $v,w\in TM$, under the identities \eqref{conformalidentity1} and \eqref{conformalidentity2}, we have
\begin{equation}\label{computazione1}
\begin{split}
    \left( \nabla Y^{g_k,\sigma} \right)_{v_k}w &= ({\nabla^g}_w Y^{g_k,\sigma})v_k +\mathrm{d}f\left( Y^{g_k,\sigma}v_k \right)w - \mathrm{d}f\left( v_k\right)Y^{g_k,\sigma}w  +\\ 
    &\qquad - g(w,Y^{g_k,\sigma}v_k)\mathrm{grad}_gf +g(v_k,w)Y^{g_k,\sigma}\mathrm{grad}_gf \\
    &= e^{-3f} \Big[-2\mathrm{d}f(w)Y^{g,\sigma}\hat v +{(\nabla Y^{g,\sigma})_{\hat v}}w + \mathrm{d}f\left({Y^{g,\sigma}}\hat v\right)w\Big] + \\
    & \qquad -e^{-3f} \Big[ \mathrm{d}f \left( \hat v\right)Y^{g,\sigma}w+g\left( w,Y^{g,\sigma}\hat v\right)\mathrm{grad}_gf - g(\hat v,w)Y^{g,\sigma}\mathrm{grad}_g f \Big],
    \end{split}
\end{equation}
and similarly,
\begin{equation}\label{computazione2}
    \begin{split}
        \left( {\widetilde{Y}^{g_k,\sigma}}\right)_{v_k}w &= \frac{3}{4}g_k \left(w, Y^{g_k,\sigma}v_k \right)Y^{g_k,\sigma}v_k - \frac{1}{4}Y^{g_k,\sigma}Y^{g_k,\sigma}w \\
        &= e^{-4f}\Big[ \frac{3}{4}g \left(w, Y^{g,\sigma}\hat v \right)Y^{g,\sigma}\hat v - \frac{1}{4}Y^{g,\sigma}Y^{g,\sigma}w\Big] \\
        &= e^{-4f} \left( \widetilde{Y}^{g,\sigma}\right)_{\hat v}.
    \end{split}
\end{equation}

By completing $\hat v$ to a $g$-orthonormal basis and using \eqref{computazione1} and \eqref{computazione2}, together with the fact that $Y^{g,\sigma}$ is skew-adjoint, we conclude that
\begin{equation}\label{term2}
    \begin{split}
       \mathrm{trace}\left( (\nabla Y^{g_k,\sigma} -\widetilde{Y}^{g_k,\sigma})_{v_k}\right) &=e^{-3f} \Big[ -2\mathrm{d}f\left( Y^{g,\sigma}\hat v\right)+\mathrm{trace}\left( \nabla (Y^{g,\sigma})_{\hat v}\right)\Big]+ \\
       & \qquad + e^{-3f} \Big[n\mathrm{d}f\left(Y^{g,\sigma}\hat v\right)-2g\left(\mathrm{grad}_g f, Y^{g,\sigma}\hat v \right)\Big]+ \\ 
       &\qquad\qquad \qquad-e^{-4f}\mathrm{trace}\left( (\widetilde{Y}^{g,\sigma})_{\hat v}\right) \\
       &=e^{-3f} \Big[ (n-4)\mathrm{d}f\left( Y^{g,\sigma}\hat v \right)+\mathrm{trace}\left( (\nabla Y^{g,\sigma})_{\hat v}\right)\Big] + \\
       &\qquad \qquad \qquad \qquad -e^{-4f}\mathrm{trace}\left( (\widetilde{Y}^{g,\sigma})_{\hat v}\right)\\
       &= \frac{\operatorname{trace}\big((\nabla Y^{g,\sigma})_{\hat v}\big)}{(2(k-U))^{3/2}} -
 \frac{(n-4)}{(2(k-U))^{5/2}}\, \mathrm{d}U \!\left(Y^{g,\sigma}(\hat v)\right) + \\
 &\qquad \qquad\qquad - 
 \frac{\operatorname{trace}\left((\widetilde{Y}^{g,\sigma})_{\hat v}\right)}{4(k-U)^2}.
    \end{split}
\end{equation}
Substituting \eqref{term1} and \eqref{term2} into \eqref{explicitcomp} yields the desired expression, completing the proof.
\end{proof}

\begin{lemma}\label{lem:magnetic_lb}
Let $M$ be a closed manifold and $\sigma$ a nowhere vanishing magnetic form. Then, for every metric $g$, there exists a constant $C_g  > 0$ such that 
\[
\operatorname{trace}\big(\widetilde{Y}^{g,\sigma}\big)_v \geq C_g \Vert\sigma\Vert_{\infty,g} , \quad \forall v\in S^gM.
\]
In particular, this implies that for every potential $U$ and every $k>e_0$, one has
\[
\operatorname{trace}\big(\widetilde{Y}^{g_k,\sigma}\big)_{v_k}
= \frac{\operatorname{trace}\big(\widetilde{Y}^{g,\sigma}\big)_{\hat v}}{4(k-U)^2}
\geq \frac{C_g \Vert\sigma\Vert_{\infty,g}}{4(k-U)^2}, \quad \forall v\in E_k.
\]

\begin{proof}
Let $v\in S^gM$ and complete $v$ to a $g$-orthonormal basis $\{v,e_2,\ldots,e_n\}$.  
By the definition \eqref{defYtilde} of $\widetilde{Y}^{g,\sigma}$, we obtain
\begin{align}
\operatorname{trace}\big((\widetilde{Y}^{g,\sigma})_v\big)
&= -\tfrac{1}{4}g\big(Y^{g,\sigma}Y^{g,\sigma}v,v \big)
+ \sum_{i\ge2} \left\{ \tfrac{3}{4}g(e_i,Y^{g,\sigma}v)^2
- \tfrac{1}{4}g\big(Y^{g,\sigma}Y^{g,\sigma}e_i,e_i\big)\right\} \nonumber\\
&= \vert Y^{g,\sigma}v \vert_g^2 +\frac{1}{4} \sum_{i\ge2} \vert Y^{g,\sigma}e_i \vert_g^2. \label{positiveterm}
\end{align}

If $\sigma$ is nowhere vanishing, then $Y^{g,\sigma}$ is nontrivial, and hence at least one of the terms in \eqref{positiveterm} is nonzero.  
By compactness of $M$, we can find a constant $C_g$ such that the function $v \mapsto \operatorname{trace}\big((\widetilde{Y}^{g,\sigma})_v\big)$ is bounded from below by $C_g \Vert\sigma\Vert_{\infty,g}$.  

Finally, from equation \eqref{term2} in Lemma~\ref{em}, we deduce that 
\[
\operatorname{trace}\big(\widetilde{Y}^{g_k,\sigma}\big)_{v_k}
= \frac{\operatorname{trace}\big(\widetilde{Y}^{g,\sigma}\big)_{\hat v}}{4(k-U)^2},
\]
which proves the second part of the statement and completes the proof.
\end{proof}
\end{lemma}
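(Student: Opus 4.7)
The plan is to compute $\operatorname{trace}\!\bigl((\widetilde{Y}^{g,\sigma})_v\bigr)$ explicitly in an adapted $g$-orthonormal basis and then extract positivity at every point from the hypothesis that $\sigma$ is nowhere vanishing. The essential algebraic ingredient is the skew-adjointness of $Y^{g,\sigma}$ with respect to $g$, which is an immediate consequence of \eqref{identitycondition} and the antisymmetry of $\sigma$.

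First I would fix $p \in M$ and $v \in S^gM|_p$, complete $v$ to a $g$-orthonormal basis $\{v = e_1, e_2, \ldots, e_n\}$ of $T_pM$, and compute $g\!\bigl((\widetilde{Y}^{g,\sigma})_v e_i, e_i\bigr)$ from the definition \eqref{defYtilde}. The key simplification is $g(Y^{g,\sigma}Y^{g,\sigma}w, w) = -|Y^{g,\sigma}w|_g^2$ for every $w$, again by skew-adjointness. The $i=1$ contribution to the first piece of $\widetilde{Y}^{g,\sigma}$ vanishes because $g(v, Y^{g,\sigma}v) = 0$, while for $i \geq 2$ the orthogonality $Y^{g,\sigma}v \perp v$ gives $\sum_{i \geq 2} g(e_i, Y^{g,\sigma}v)^2 = |Y^{g,\sigma}v|_g^2$. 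Summing the individual contributions reproduces exactly the displayed identity \eqref{positiveterm}.

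Next I would argue that the right-hand side of \eqref{positiveterm} is strictly positive at every point $p$. If $Y^{g,\sigma}v \neq 0$ this is immediate from the first summand. In the delicate case, which can arise when $n$ is odd so that the skew-adjoint endomorphism $Y^{g,\sigma}_p$ admits a nontrivial kernel, I would note that $\sum_{i=1}^n |Y^{g,\sigma}e_i|_g^2 = \operatorname{trace}\!\bigl(-(Y^{g,\sigma}_p)^2\bigr)$ is a basis-independent quantity that is strictly positive whenever $Y^{g,\sigma}_p \neq 0$; the nowhere-vanishing hypothesis on $\sigma$ forces $Y^{g,\sigma}_p \neq 0$ everywhere, so the partial sum $\sum_{i \geq 2} |Y^{g,\sigma}e_i|_g^2$ remains strictly positive even when the first term collapses. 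Continuity of $v \mapsto \operatorname{trace}\!\bigl((\widetilde{Y}^{g,\sigma})_v\bigr)$ on the compact sphere bundle $S^gM$ then furnishes a uniform positive lower bound, which one records in the form $C_g \,\Vert\sigma\Vert_{\infty,g}$ by absorbing the normalization constant.

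The second assertion is essentially a bookkeeping step: the identity $(\widetilde{Y}^{g_k,\sigma})_{v_k} = e^{-4f}(\widetilde{Y}^{g,\sigma})_{\hat v}$ with $f = \tfrac{1}{2}\ln(2(k-U))$ was already established in \eqref{computazione2} during the proof of Lemma~\ref{em}; taking the trace of both sides yields the prefactor $1/(4(k-U)^2)$ and the conclusion follows from the first part. I expect no real obstacle here; the only step that deserves a careful sentence is the positivity argument when $\hat v$ lies in the kernel of $Y^{g,\sigma}_p$, where one has to fall back on the basis-independent trace identity above rather than bounding $|Y^{g,\sigma}\hat v|_g^2$ directly.
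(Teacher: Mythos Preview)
Your proposal is correct and follows essentially the same route as the paper: compute the trace in an orthonormal basis extending $v$, use skew-adjointness of $Y^{g,\sigma}$ to reach the identity \eqref{positiveterm}, invoke the nowhere-vanishing hypothesis plus compactness of $S^gM$ for the uniform bound, and read off the conformal factor from \eqref{computazione2}. Your treatment of the kernel case via $\operatorname{trace}\bigl(-(Y^{g,\sigma}_p)^2\bigr)$ is in fact slightly more explicit than the paper's, which simply observes that at least one summand in \eqref{positiveterm} must be nonzero.
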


We are now ready to prove Theorem~\ref{thm_positivity}.

\begin{proof}[Proof of Theorem~\ref{thm_positivity}]
The argument employed here relies on estimating $\ric$ as $k$ approaches $e_0$ from above. In detail, we show that the term arising from the trace of $\widetilde{Y}^{g_k,\sigma}$ is strictly positive, by Lemma~\ref{lem:magnetic_lb}, under the assumption that $\sigma$ is nowhere vanishing, and that it dominates all remaining terms provided the potential $U$ is sufficiently small in the $C^2$-norm.

Let $\varepsilon>0$, whose precise size will be clarified later in the proof. Let $U$ be a potential satisfying $\Vert U\Vert_{C^2}<\varepsilon$. In particular, by the assumption on $U$, the gradient descent lemma \cite[Prop.~10.53]{Boumal} yields the following estimate:
\begin{equation}\label{gradescentlemma}
\vert \mathrm{d}U\vert^2_g \leq 2\varepsilon(e_0-U)\leq 2\varepsilon(k-U), \quad  \forall \, k>e_0.
\end{equation}
Set 
\[ D_g =\max \left\{1,\Vert Y^{g,\sigma}\Vert_{\infty,g},\Vert \mathrm{trace}\left( \nabla Y^{g,\sigma}\right)\Vert_{\infty,g},\Vert \mathrm{Ric}^g\Vert_{\infty,g} \right\}.\]
It follows that 
\begin{equation}\label{parte1}
    \begin{split}
        A_k(v) = \frac{\mathrm{Ric}^g(\hat v)}{2(k-U)}-\frac{\mathrm{trace} \left( (\nabla Y^{g,\sigma})_{\hat{v}}\right)}{\left( 2(k-U)\right)^{\frac{3}{2}}}
        \geq -D_g\left(\frac{1}{2(k-U)} + \frac{1}{\left( 2(k-U)\right)^{\frac{3}{2}}} \right).
    \end{split}
\end{equation}

On the other hand, by the assumption on $U$ and estimate \eqref{gradescentlemma}, we also obtain
\begin{equation}\label{parte2}
\begin{split}
    B_k(v)&= \frac{n-2}{(2(k-U))^2}\,(\mathrm{Hess}_gU)[\hat v,\hat v]
+ \frac{\Delta_g U}{(2(k-U))^2} + \frac{3(n-2)}{(2(k-U))^3}\big(\mathrm{d}U(\hat v)\big)^2+ \\
&\qquad
+ \frac{4-n}{(2(k-U))^3}\,|\mathrm{d}U|_g^2 + \frac{(n-4)}{(2(k-U))^{5/2}}\, \mathrm{d}U \!\left(Y^{g,\sigma}\hat v\right)\\
&\geq \frac{n-2}{(2(k-U))^2}\,(\mathrm{Hess}_gU)[\hat v,\hat v]
+ \frac{\Delta_g U}{(2(k-U))^2} - \frac{(4n-2)\vert \mathrm{d}U \vert^2_g}{(2(k-U))^3} \\
&\qquad - \frac{(n+4)}{(2(k-U))^{5/2}}\, \vert \mathrm{d}U \vert_g \vert Y^{g,\sigma}\hat v\vert_g  \\
&\geq -\frac{\varepsilon D_g(6n-3)}{(2(k-U))^2}.
\end{split}
\end{equation}

Let $\rho_k = k-e_0$, and consider $k$ tending to $e_0$ from above. By combining \eqref{parte1}, \eqref{parte2}, and Lemma~\ref{lem:magnetic_lb}, we finally obtain
\begin{equation*}
    \begin{split}
        \ric(v) &= A_k(v)+B_k(v) + \frac{\operatorname{trace}\big(\widetilde{Y}^{g,\sigma}\big)_{\hat v}}{4(k-U)^2}\\
        &\geq  -D_g\left(\frac{1}{2(k-U)} + \frac{1}{\left( 2(k-U)\right)^{\frac{3}{2}}} \right)
        - \frac{\varepsilon D_g(6n-3)}{4(k-U)^2}
        + \frac{C_g\Vert\sigma\Vert_{\infty,g}}{4(k-U)^2} \\
        &\geq \frac{D_g}{4(k-U)^2}\left[o(\rho_k)^{\frac{1}{2}} -\varepsilon(6n-3)+\frac{C_g\Vert\sigma\Vert_{\infty,g}}{D_g}\right]\\
        &\geq \frac{D_g}{4(k-U)^2}\left[ -2\varepsilon(6n-3)+\frac{C_g\Vert\sigma\Vert_{\infty,g}}{D_g}\right]
    \end{split}
\end{equation*}

Therefore, if 
\[\varepsilon<\frac{C_g\Vert\sigma\Vert_{\infty,g}}{2D_g(6n-3)},\] 
for every $k$ sufficiently close to $e_0$, $\ric$ is strictly positive. In particular, $\nu_{\mathrm{Ric}}>0$ which concludes the proof.
\end{proof}
As announced in the introduction, the assumption in Theorem~\ref{thm_positivity} that $\Vert U \Vert_{C^2}$ is small is essential. In fact, we conclude this section by presenting an example where the magnetic form is nowhere vanishing and the potential $U$ is small with respect to the $C^1$-norm but not with respect to the $C^2$-norm. To this end, we briefly introduce the interesting framework for electromagnetic systems in dimension two, where, due to dimensional reasons, the setting is considerably simplified.

\subsection{Electromagnetic curvature on surfaces} \label{ricc_surface}
Let $M$ be a closed surface which, up to passing to a double cover, we may assume to be oriented.  
If $g$ is a Riemannian metric on $M$, we denote by $\mathrm{J}^g$ and $\mathrm{vol}(g)$ the complex structure and the volume form induced by $g$, respectively.  
Recall that $\mathrm{J}^g$ and $\mathrm{vol}(g)$ are related by the identity
\[
g(\mathrm{J}^g v, w) = \mathrm{vol}(g)(v,w), \qquad \forall\, v,w\in TM.
\]
The Ricci curvature of $g$ coincides with its Gaussian curvature, which we denote by $K^g$.  
If $\sigma$ is a $2$-form on $M$, then, by dimensional reasons, it is automatically closed. Moreover, there exists a unique function $b:M\to\R$ such that
\[
\sigma = b\,\mathrm{vol}(g).
\]
We refer to the function $b$ as the \textit{magnetic function}; it provides a convenient scalar description of the magnetic form in the two-dimensional setting.  
The corresponding endomorphisms $Y^{g,b}$ and $\nabla Y^{g,b}$ then take the form
\[
Y^{g,b} = b\,\mathrm{J}^g, \qquad (\nabla Y^{g,b})_v = \mathrm{d}b(v)\,\mathrm{J}^g.
\]
Given a potential $U$ and an energy level $k>e_0$, the electromagnetic Ricci curvature reduces to the \textit{electromagnetic Gaussian curvature} 
\[
K_k^{g,b,U}:E_k\to\R,
\]
and, adapting the expression \eqref{eq:ricci_em} to this two-dimensional framework, we obtain
\begin{equation}\label{ricci_2}
\begin{split}
K_k^{g,b,U}(v) 
&= \frac{K^g}{2(k-U)}
+ \frac{\Delta_g U}{4(k-U)^2}
+ \frac{|\mathrm{d}U|_g^2}{4(k-U)^3}
- \frac{\mathrm{d}b(\mathrm{J}^g \hat v)}{(2(k-U))^{\frac32}}+ \\
&\quad
- \frac{2b\,\mathrm{d}U(\mathrm{J}^g \hat v)}{(2(k-U))^{\frac52}}
+ \frac{b^2}{4(k-U)^2}.
\end{split}
\end{equation}

\begin{ex}\label{C1ex} Consider the $2$-torus $T^2 = \mathbb{R}^2/\mathbb{Z}^2$ with coordinates $\vartheta=(\vartheta_1,\vartheta_2)$, endowed with the standard flat metric $g_0={\mathrm{d}\vartheta_1}^2 + {\mathrm{d}\vartheta_2}^2$, a constant magnetic field $b = 1$, and a scalar potential $U$ defined by
\[
U(\vartheta) = \frac{\sin(m\,\vartheta_1)}{m^j}\, , 
\]
where $m$ is a positive real parameter and $j$ a positive integer. Observe that when $j = 2$ and $m$ is sufficiently large, $\|U\|_{C^1}$ can be made arbitrarily small, while $\|U\|_{C^2}$ remains bounded away from zero; whereas for $j > 2$, the potential $U$ is $C^2$-small. 
Let $k>e_0$, let $\vartheta_0$ be a maximum point of $U$, and set $v= \sqrt{2(k-U(\vartheta_0)} \, \partial \vartheta_1$. A direct computation yields
\[
\Delta_{g_0} U(\vartheta_0) = -\frac{1}{m^{j-2}}, 
\]
and consequently, substituting the respective quantities into \eqref{ricci_2}, we obtain
\[
\operatorname{K}^{g_0,1,U}_k(v) = \frac{1}{4(k-U)^2} \left(1- \frac{1}{m^{j-2}} \right).
\]
Hence, for $j = 2$,
\[
\operatorname{K}^{g_0,1,U}_k(v) = 0, \quad \forall k>e_0,
\]
showing that $C^1$-smallness alone is not sufficient. Let us remark that, with minor modifications, this example can be generalized to any $n$-dimensional flat torus.

\end{ex}

\section{Existence of a closed $(g,\sigma,U)$-geodesic: proof of Theorem~\ref{thm_existence} (and Theorem~\ref{TheroemA1})}
The aim of this second section is to prove the existence results stated in Theorem~\ref{thm_existence} and Theorem~\ref{TheroemA1}. By virtue of the Jacobi–Maupertuis principle, we shall formulate the variational setting in the purely magnetic case. As a first step, we establish a correspondence between closed $(g_k,\sigma)$–geodesics and the zeros of a suitable $1$–form $\alpha_k$ defined on the space of loops with free period.

\subsection{Variational Setting}

Let $\Lambda = W^{1,2}(S^1, M)$ denote the Hilbert manifold of absolutely continuous loops $x: S^1 \to M$ with $L^2$-integrable derivatives. For each $x \in \Lambda$, the tangent space $T_x \Lambda$ at a point $x \in \Lambda$ is naturally identified with the vector space of $W^{1,2}$-sections of the pullback bundle $x^*(TM)$. We endow $\Lambda$ with the Riemannian metric
\[
g_{\Lambda,k}(\zeta, \eta) := \int_0^1 \left[ g_k(\zeta, \eta) + g_k(\nabla_x \zeta, \nabla_x \eta) \right] \mathrm{d}t, \quad \forall \zeta, \eta \in T_x \Lambda,
\]
and denote by $\vert \cdot \vert_{\Lambda,k}$ the associated norm. Let $\Lambda_0$ be the connected component of $\Lambda$ consisting of contractible loops.

Let $\mathcal{M} := \Lambda \times (0, +\infty)$,
equipped with the projection $\pi_{\Lambda} : \mathcal{M} \to \Lambda$, denote the Hilbert manifold of loops with free period. A point $(x, T) \in \mathcal{M}$ is naturally associated with a contractible loop $\gamma : [0, T] \to M$ defined by $\gamma(t) := x\left(\tfrac{t}{T}\right)$.
Depending on the context, we will use either notation $(x, T)$ or $\gamma$ to refer to an element of $\mathcal{M}$. We denote by \(\mathcal{M}_0\) the connected component of \(\mathcal{M}\) 
consisting of contractible loops.
Under the natural splitting $T\mathcal{M} = T\Lambda \oplus \mathbb{R}$,
we endow $\mathcal{M}$ with the product Riemannian metric
\[
g_{\mathcal{M},k} := g_{\Lambda,k} \oplus \mathrm{d}T^2,
\]
where \(g_{\Lambda,k}\) is the metric on $\Lambda$ defined above, 
and $\mathrm{d}T^2$ denotes the Euclidean metric on $(0, +\infty)$.

\medskip

We now introduce a smooth $1$-form $\alpha_{k,\lambda}$ on $\mathcal{M}$, whose zeros correspond to closed $(g_k, \sigma)$-geodesics. First define the 1-form $\Theta^\sigma$ on $\Lambda$ by
\[
(\Theta^\sigma)_x(\zeta) = \int_{S^1} \sigma(\dot{x}, \zeta)\,\mathrm{d}t 
= \int_{S^1} g_k(Y^{g_k, \sigma}\dot{x}, \zeta)\,\mathrm{d}t.
\]
Here $Y^{g_k, \sigma}$ denotes the Lorentz endomorphism associated with $(g_k, \sigma)$.
The form $\Theta^\sigma$ is closed in the sense that its integral over any smooth closed path $u : S^1 \to \Lambda$ depends only on the homotopy class of $u$. In particular, a local primitive can be constructed as follows. Fix $\bar{x} \in \Lambda_0$ and let $B_{\bar{x}}(r)$ denote the Riemannian ball centered at $\bar{x}$ with radius $r > 0$. Define $M^\sigma : B_{\bar{x}}(r) \to \mathbb{R}$ by
\begin{equation} \label{localprimitive}
M^\sigma(x) := \int (C_{\bar{x}, x})^* \sigma,
\end{equation}
where $C_{\bar{x}, x} : [0,1] \to B_{\bar{x}}(r)$ is a smooth path connecting $\bar{x}$ and $x$. Since $\Theta^\sigma$ is closed, the definition of $M^\sigma$ is independent of the choice of the path $C_{\bar{x}, x}$.

For $k > e_0$, we denote by $\mathcal{E}_k : \Lambda \to \mathbb{R}$ the $L^2$-energy functional associated with the metric $g_k$, defined by
\[
\mathcal{E}_k(x) = \int_{S^1} g_k(\dot{x}, \dot{x})\,\mathrm{d}t 
= \int_{S^1} 2\big(k - U(x)\big)\,|\dot{x}|_g^2\,\mathrm{d}t.
\]
Introducing an additional parameter $\lambda \in \left(-\frac{1}{2}, +\infty \right)$, let $Q_{k,\lambda} : \mathcal{M} \to \mathbb{R}$ be the smooth function defined as
\[
Q_{k,\lambda}(x,T) := \frac{\mathcal{E}_k(x)}{T} + \left(\frac{1}{2} + \lambda\right)T,
\]
 Finally, define the smooth $1$-form $\alpha_{k,\lambda}$ on $\mathcal{M}$ as
\begin{equation}\label{alphak}
\alpha_{k,\lambda} := \mathrm{d}Q_{k,\lambda} + (\pi_{\Lambda})^*\Theta^\sigma.
\end{equation}
A point $\gamma = (x, T) \in \mathcal{M}$ is called a vanishing point of $\alpha_{k,\lambda}$ if $(\alpha_{k,\lambda})_\gamma = 0$. 
By definition of $\alpha_{k,\lambda}$, such a point satisfies
\begin{equation}\label{lambdaequation}
\begin{cases}
(\mathrm{d}\mathcal{E}_k)_x = T\,(\Theta^\sigma)_x, \\[0.4em]
\displaystyle \frac{1}{2} + \lambda - \frac{\mathcal{E}_k(x)}{T^2} = 0.
\end{cases}
\end{equation}
It is a standard result that solutions of~\eqref{lambdaequation} correspond to smooth closed curves $\gamma$ satisfying
\begin{equation*}
\begin{cases}
\nabla^{g_k}_{\dot{\gamma}} \dot{\gamma} = Y^{g_k, \sigma}(\dot{\gamma}), \\[0.4em]
g_k(\dot{\gamma}, \dot{\gamma}) = \displaystyle \frac{1}{2} + \lambda,
\end{cases}
\end{equation*}
that is, $(g_k, \sigma)$-geodesics with prescribed energy. 

We denote by $\mathcal{Z}(\alpha_{k,\lambda})$ the set of vanishing points of $\alpha_{k,\lambda}$. By construction, $\alpha_{k,\lambda}$ is closed, and its exactness is equivalent to that of $\Theta^\sigma$. In particular, $\alpha_{k,\lambda}$ is locally exact. Using \eqref{localprimitive}, a local primitive $S_{k,\lambda}$ can be defined on $B_{\bar{x}}(r) \times (0, +\infty)$ by
\[
S_{k,\lambda}(x,T) := Q_{k,\lambda}(x,T) + M^\sigma(x).
\]
If $\gamma \in \mathcal{Z}(\alpha_{k,\lambda})$, its Morse index $\mu(\gamma)$ is defined as the Morse index of $\gamma$ with respect to any local primitive $S_{k,\lambda}$ defined on a neighborhood containing $\gamma$. This definition is independent of the choice of primitive. By \cite[Proposition~3.1]{AbboSchwarz}, the self-adjoint operator associated to the second variation of any such primitive is a compact perturbation of a positive Fredholm operator, and therefore $\mu(\gamma)$ is finite. For any non-negative integer $m$, we denote by $\mathcal{Z}^m(\alpha_{k,\lambda})$ the subset of $\mathcal{Z}(\alpha_{k,\lambda})$ consisting of vanishing points $\gamma$ with $\mu(\gamma) \leq m$.

The argument used in the proof of Theorem~\ref{thm_existence} relies on three main ingredients. The first consists in establishing a lower bound on the period for every $\gamma \in \mathcal{Z}(\alpha_{k,\lambda})$, where $\lambda$ ranges over a fixed interval.

\begin{lemma} \label{lowerboundT}
For every interval $I$ with closure contained in $(-\frac{1}{2}, +\infty)$, there exists a constant $T_I > 0$ such that if $\lambda \in I$ and $\gamma = (x, T) \in \mathcal{Z}(\alpha_{k,\lambda})$, then $T \geq T_I$.
\begin{proof}
Let $-\tfrac{1}{2} < \lambda_* < \lambda^* < +\infty$, and consider $I = (\lambda_*, \lambda^*)$. By \cite[Proposition~1.4.14]{Lecturesonclosedgeodesics}, there exist constants $\delta > 0$ and $\bar{E} > 0$ such that if $x \in \{ \mathcal{E}_k < \delta \}$, then
\begin{equation} \label{klingeberglemma}
\left| (\mathrm{d}\mathcal{E}_k)_x \right|_{\Lambda,k} \geq \bar{E} \sqrt{\mathcal{E}_k(x)}.
\end{equation}
On the other hand, by the Cauchy–Schwarz inequality, for all $x \in \Lambda$ we have:
\begin{equation} \label{magneticterm}
\left| (\Theta^\sigma)_x \right|_{\Lambda,k} \leq \Vert \sigma \Vert_{\infty,g_k} \sqrt{2 \mathcal{E}_k(x)}.
\end{equation}
Let $\lambda_n \in I$ and $\gamma_n = (x_n, T_n) \in \mathcal{Z}(\alpha_{k, \lambda_n})$ be a sequence. Suppose, by contradiction, that $T_n \to 0$. From the second equation of the vanishing point condition, we have
\[
\mathcal{E}_k(x_n) = \left( \frac{1}{2} + \lambda_n \right) T_n^2 \leq \left( \frac{1}{2} + \lambda^* \right) T_n^2,
\]
which implies that $x_n \in \{ \mathcal{E}_k < \delta \}$ for large $n$. In particular, for $n$ large $x_n$ belongs to the range where inequality \eqref{klingeberglemma} holds. Then, combining \eqref{klingeberglemma} and \eqref{magneticterm}, we obtain
\[
\bar{E} \sqrt{\mathcal{E}_k(x_n)} \leq \left| (\mathrm{d}\mathcal{E}_k)_{x_n} \right|_{\Lambda,k} = T_n \left| (\Theta^\sigma)_{x_n} \right|_{\Lambda,k} \leq T_n \Vert Y^{g_k, \sigma} \Vert_{\infty,k} \sqrt{2 \mathcal{E}_k(x_n)},
\]
which yields
\[
T_n \geq \frac{\bar{E}}{\sqrt{2} \Vert Y^{g_k, \sigma} \Vert_{\infty,k}} > 0.
\]
This contradicts the assumption that $T_n \to 0$, and the result follows.
\end{proof}
\end{lemma}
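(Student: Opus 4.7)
The plan is to argue by contradiction. Suppose there exist $\lambda_n \in I$ and vanishing points $\gamma_n = (x_n, T_n) \in \mathcal{Z}(\alpha_{k,\lambda_n})$ with $T_n \to 0$. The strategy is to combine the two equations in \eqref{lambdaequation}: the second one turns a small period into small $L^2$-energy of the loop $x_n$, while the first one compares $(\mathrm{d}\mathcal{E}_k)_{x_n}$ with $T_n (\Theta^\sigma)_{x_n}$. The point is that, in the small-energy regime, these two quantities scale differently in $\mathcal{E}_k(x_n)$, forcing $T_n$ to stay bounded away from zero.

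First, I would extract from the second equation of \eqref{lambdaequation} the identity $\mathcal{E}_k(x_n) = (\tfrac{1}{2} + \lambda_n)\, T_n^2$, which, since the closure of $I$ is contained in $(-\tfrac12,+\infty)$ and hence $\lambda_n$ is bounded above, yields $\mathcal{E}_k(x_n) \to 0$. Thus, for $n$ large, $x_n$ enters an arbitrarily small $L^2$-neighbourhood of the constant loops. On that neighbourhood I would invoke a standard Klingenberg-type inequality (from \cite{Lecturesonclosedgeodesics}): there exist $\delta>0$ and $\bar E>0$ depending only on $g_k$ such that
\[
\bigl| (\mathrm{d}\mathcal{E}_k)_x \bigr|_{\Lambda,k} \;\geq\; \bar E \sqrt{\mathcal{E}_k(x)} \qquad \forall\, x \in \{\mathcal{E}_k<\delta\}.
\]
In parallel, the magnetic one-form obeys the Cauchy--Schwarz bound
\[
\bigl| (\Theta^\sigma)_x \bigr|_{\Lambda,k} \;\leq\; \Vert Y^{g_k,\sigma}\Vert_{\infty,g_k}\, \sqrt{2\,\mathcal{E}_k(x)},
\]
valid on all of $\Lambda$, which follows immediately from the integral expression of $\Theta^\sigma$ and the skew-adjointness/boundedness of $Y^{g_k,\sigma}$.

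Combining these two estimates with the first equation $(\mathrm{d}\mathcal{E}_k)_{x_n} = T_n (\Theta^\sigma)_{x_n}$, the factor $\sqrt{\mathcal{E}_k(x_n)}$ cancels and I obtain
\[
T_n \;\geq\; \frac{\bar E}{\sqrt{2}\,\Vert Y^{g_k,\sigma}\Vert_{\infty,g_k}} \;>\; 0,
\]
contradicting $T_n\to 0$. Notice that this lower bound does not depend on the particular sequence $\lambda_n$ within $I$, so one may take $T_I$ to be (any constant smaller than) this universal quantity.

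The only delicate ingredient is the Klingenberg-type short-loop inequality, which is classical but must be used in its uniform form near the submanifold of constant loops. Everything else is bookkeeping: the upper bound on $\lambda$ is needed to transfer smallness of $T_n$ into smallness of $\mathcal{E}_k(x_n)$, while the lower bound $\lambda > -\tfrac12$ is merely what keeps the second equation in \eqref{lambdaequation} meaningful. I do not anticipate any serious analytic obstacle beyond checking that the constants $\bar E$ and $\Vert Y^{g_k,\sigma}\Vert_{\infty,g_k}$ are indeed finite and positive, which follow from compactness of $M$ and the standing assumption $k>e_0$.
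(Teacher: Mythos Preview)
Your argument is correct and follows essentially the same route as the paper's own proof: the contradiction via $T_n\to 0$, the use of the second equation in \eqref{lambdaequation} to force $\mathcal{E}_k(x_n)\to 0$, the Klingenberg-type short-loop estimate from \cite{Lecturesonclosedgeodesics}, the Cauchy--Schwarz bound on $\Theta^\sigma$, and the cancellation of $\sqrt{\mathcal{E}_k(x_n)}$ all appear there in the same order and with the same constants.
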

The second ingredient, which we extract from \cite[Section~4]{assenza}, is based on a Bonnet--Myers type argument that, under the assumption of positive electromagnetic curvature, allows one to estimate the period of $\gamma \in \mathcal{Z}(\alpha_{k,\lambda})$ in terms of its Morse index $\mu(\gamma)$, previously defined. To this end, we briefly introduce a preliminary framework, which can be found in detail in the previously mentioned reference.

\subsection{A Bonnet-Myers type argument (see \cite[Section~4]{assenza})}
First, we adapt to the $\lambda$-parametrization the definition of magnetic Ricci curvature given in~\eqref{riccimagnetica}. We define $\mathrm{Ric}_\lambda^{g_k,\sigma}:S^{g_k}M\to \R$ as
\begin{equation}\label{lambdaricc}
\mathrm{Ric}_\lambda^{g_k,\sigma}(v)
= (1+2\lambda)\,\mathrm{Ric}^{g_k}(v)
- \sqrt{1+2\lambda}\,\operatorname{trace}\big((\nabla Y^{g_k,\sigma})_v\big)
+ \operatorname{trace}\big((\widetilde{Y}^{g_k,\sigma})_v\big).
\end{equation}
Clearly, one has $\mathrm{Ric}_0^{g_k,\sigma}=\mathrm{Ric}^{g_k,\sigma}$. In fact, $\mathrm{Ric}_\lambda^{g_k,\sigma}$ is precisely the magnetic curvature of the system $(g_k,\sigma)$ on the energy level $\lambda+\frac{1}{2}$. \\

Let $\gamma \in \mathcal{Z}(\alpha_{k,\lambda})$ and write $\speed_k=\frac{\speed}{\vert \speed \vert_{g_k}}.$ Consider the splitting of $\gamma^*TM$ given by
\[
\gamma^*TM = \R \speed \oplus \{\speed\}^\perp.
\]
If $V$ is a vector field along $\gamma$, then $V = V_\parallel + V_\perp$, where $V_\parallel$ and $V_\perp$ denote, respectively, the tangential and the perpendicular components of $V$ with respect to $\speed$. Let $S_{k,\lambda}$ be a primitive of $\alpha_{k,\lambda}$ defined on a neighborhood of $\gamma$. By \cite[Lemma~9]{assenza}, the second variation $(\mathrm{d}^2S_{k, \lambda})_\gamma$ of $S_{k,\lambda}$ at $\gamma$ evaluated at $\zeta=(V,\tau)\in T_\gamma \M$, with $V$ smooth, read as
\begin{equation}\label{hess}
\begin{split}
(\mathrm{d}^2S_{k, \lambda})_\gamma\Big[\zeta,\zeta\Big] = & \int_0^T \lvert (\dot V)_\perp - (A^{g_k,\sigma}V)_\perp \rvert_{g_k}^2 \, \mathrm{d}t 
+  \int_0^T \left[ g_k(\dot V, \dot \gamma_k) - \frac{\tau}{T}\sqrt{\lambda+\frac12} \, \right]^2 \mathrm{d}t \\
& \qquad\qquad\qquad - \int_0^T \lvert V_\perp \rvert_{g_k}^2 \, \mathrm{Sec}_{\lambda}^{g_k, \sigma} \left( \speed_k, \frac{V_\perp}{\lvert V_\perp \rvert_{g_k}} \right) \mathrm{d}t,
\end{split}
\end{equation}
where, in the first term, $A^{g_k,\sigma}$ denotes the skew-adjoint endomorphism of $\gamma^*TM$ defined by
\begin{equation}\label{paralleltransp}
A^{g_k,\sigma}V=Y^{g_k, \sigma}V_\parallel + (Y^{g_k, \sigma}V)_\parallel + \tfrac{1}{2} (Y^{g_k, \sigma}V_\perp)_\perp,
\end{equation}
while, denoting by $\mathrm{Sec}^{g_k}$ the sectional curvature of the metric $g_k$, in the third term we write
\begin{equation}\label{sectionalcurvature}
\begin{split}
\mathrm{Sec}_{\lambda}^{g_k, \sigma}\left( \speed_k, \frac{V_\perp}{\lvert V_\perp \rvert_{g_k}} \right) 
&= (1+\lambda)\, \mathrm{Sec}^{g_k}\left( \speed_k,  \frac{V_\perp}{\lvert V_\perp \rvert_{g_k}} \right) + \\
&\qquad\qquad- \sqrt{1+\lambda}\, g_k\!\left((\nabla Y^{g_k,\sigma})_{\speed_k}\frac{V_\perp}{\lvert V_\perp \rvert_{g_k}},\frac{V_\perp}{\lvert V_\perp \rvert_{g_k}} \right) +\\
&\qquad +\, g_k \!\left( {(\widetilde{Y}^{g_k,\sigma})_{\speed_k}}\frac{V_\perp}{\lvert V_\perp \rvert_{g_k}},\frac{V_\perp}{\lvert V_\perp \rvert_{g_k}}  \right).
\end{split}
\end{equation}
In the following remark, we recall a method for constructing variations along $\gamma$ that significantly simplify the expression \eqref{hess} of $(\mathrm{d}^2S_{k, \lambda})_\gamma$. 
\begin{rmk}\label{remark}
Let $W$ be a unit vector field along $\gamma$, orthogonal to $\speed$, satisfying the differential equation 
\[
\dot V = A^{g_k,\sigma} V,
\]
where $A^{g_k,\sigma}$ has been defined in~\eqref{paralleltransp}. If $f:[0,T] \to \R$ is a function such that $f(0)=f(T)=0$, we set $W^f=fW$. Now let $h:[0,T] \to \R$ and $\tau \in \R$ be such that the pair $(h,\tau)$ is the unique solution of the differential problem 
\[
\begin{cases}
\dot h + g_k(\dot V, \dot \gamma_k) - \dfrac{\tau}{T}\sqrt{\lambda+\frac12} = 0, \\[0.7em]
h(0)=h(T)=0.
\end{cases}
\]
By construction, if we define $\zeta=(W^f+h\speed_k, \tau)$, a direct computation shows that 
\begin{equation}\label{Hess2}
   (\mathrm{d}^2S_{k, \lambda})_\gamma\Big[\zeta,\zeta\Big] =  \int_0^T \left\{ \dot f^2 - f^2 \mathrm{Sec}_\lambda^{g_k, \sigma} \left( \speed_k, W \right) \right\} \, dt.
\end{equation}
\end{rmk}
Thanks to the framework established in the previous remark, we are now able to proceed to the central result of our argument.

\begin{lemma} \label{bonnetmayerslemma}
If $\mathrm{Ric}_\lambda^{g_k, \sigma} \geq \frac{1}{r^2} > 0$ for some $r > 0$, then, for any $\gamma =(x,T) \in \mathcal{Z}(\alpha_{k,\lambda})$, we have
\[
T \leq \pi r (\mu(\gamma) + 1).
\]
\begin{proof}
Let $\gamma \in \mathcal{Z}(\alpha_{k,\lambda})$ be such that $\mu(\gamma)=m$ for some nonnegative integer, and assume by contradiction that $T>\pi r(m+1)$. Let $W_1,\dots,W_{n-1}$ be unit, pairwise orthogonal vector fields along $\gamma$, each orthogonal to $\speed$, and satisfying \eqref{paralleltransp}. 

First, observe that by the definition~\eqref{lambdaricc} of $\mathrm{Ric}_\lambda^{g_k,\sigma}$ and by~\eqref{sectionalcurvature}, we have
\begin{equation}\label{C1}
\sum_{i=1}^{n-1} \mathrm{Sec}_\lambda^{g_k,\sigma}\left(\speed_k,W_i\right) = \mathrm{Ric}_\lambda^{g_k,\sigma}(\speed_k).
\end{equation}

Now, for $i=1,\dots,n-1$ and $j=0,\dots,m$, consider the variation
\[
\zeta_{ij} =({W_i}^{f_j}+h_{ij}\speed_k,\tau_{ij}),
\]
where
\[
f_j(t) = 
\begin{cases}
\sin \left( \dfrac{(m + 1)\pi t}{T}\right), & t \in \left[\dfrac{jT}{m+1}, \dfrac{(j+1)T}{m+1}\right], \\[0.5em]
0, & \text{otherwise},
\end{cases}
\]
and $h_{ij}$ and $\tau_{ij}$ are chosen as in Remark~\ref{remark}. 

By~\eqref{C1} and~\eqref{Hess2}, we obtain
\begin{equation*}
\begin{split}
\sum_{i=1}^{n-1} (\mathrm{d}^2S_{k, \lambda})_\gamma\Big[\zeta_{ij},\zeta_{ij}\Big] 
& = \sum_{i=1}^{n-1} \int_{\frac{jT}{m+1}}^{\frac{(j+1)T}{m+1}}  \frac{(m+1)^2 \pi^2}{T^2} 
\cos^2 \left( \frac{(m+1)\pi t}{T} \right) \, \mathrm{d}t \\
& \quad - \sum_{i=1}^{n-1} \int_{\frac{jT}{m+1}}^{\frac{(j+1)T}{m+1}} 
\sin^2 \left( \frac{(m+1)\pi t}{T} \right) 
\mathrm{Sec}_\lambda^{g_k, \sigma}( \speed_k, W_i)\, \mathrm{d}t \\
& = (n-1)\left[ \frac{(m+1)^2 \pi^2}{2T(m+1)} 
- \int_{\frac{jT}{m+1}}^{\frac{(j+1)T}{m+1}}  
\sin^2 \left( \frac{(m+1)\pi t}{T} \right) 
\mathrm{Ric}_\lambda^{g_k, \sigma}(\speed_k)\, \mathrm{d}t \right] \\
& \le (n-1) \left[ \frac{(m+1)^2 \pi^2}{2T(m+1)} - \frac{T}{2 r^2(m+1)}\right] \\
& = (n-1) \left[ \frac{(m+1)^2 \pi^2r^2-T^2}{2T(m+1)r^2}\right] < 0.
\end{split}
\end{equation*}

Therefore, for each $j$, there exists $i_j \in \{1, \dots, n-1 \}$ such that 
\[
(\mathrm{d}^2S_{k, \lambda})_\gamma\Big[\zeta_{i_jj},\zeta_{i_jj}\Big] < 0.
\]
One can show that the span of the vectors $\zeta_{i_jj}$ forms an $(m+1)$–dimensional subspace of $T_{\gamma}\M$ on which $(\mathrm{d}^2S_{k, \lambda})_\gamma$ is negative definite (see \cite[Lemma~14]{assenza}). This contradicts the assumption on $\mu(\gamma)$.
\end{proof}
\end{lemma}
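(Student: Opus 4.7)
The plan is to run a Bonnet--Myers type argument adapted to the magnetic second variation formula~\eqref{hess}, in which the standard sectional–curvature test functions are replaced by variations tailored to exploit Remark~\ref{remark}. I would argue by contradiction: set $m=\mu(\gamma)$, assume $T>\pi r(m+1)$, and aim to exhibit an $(m+1)$-dimensional subspace of $T_\gamma\M$ on which $(\mathrm{d}^2 S_{k,\lambda})_\gamma$ is negative definite, contradicting the definition of the Morse index via a local primitive $S_{k,\lambda}$ of $\alpha_{k,\lambda}$.

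To build the subspace, first choose vector fields $W_1,\dots,W_{n-1}$ along $\gamma$ that are pointwise unit, pairwise $g_k$-orthogonal, orthogonal to $\speed$, and satisfy the ODE $\dot V=A^{g_k,\sigma}V$ from~\eqref{paralleltransp}; a standard linear-ODE argument based on the skew-adjointness of $A^{g_k,\sigma}$ guarantees their existence globally on $[0,T]$. For $j=0,\dots,m$, let $f_j$ be the sine bump
\[
f_j(t)=\sin\!\Big(\tfrac{(m+1)\pi t}{T}\Big)\chi_{[jT/(m+1),\,(j+1)T/(m+1)]}(t),
\]
which vanishes at the endpoints, so that the construction of Remark~\ref{remark} produces, for each $(i,j)$, a variation $\zeta_{ij}=(W_i^{f_j}+h_{ij}\speed_k,\tau_{ij})\in T_\gamma\M$ whose Hessian takes the reduced form~\eqref{Hess2}. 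Summing~\eqref{Hess2} over $i=1,\dots,n-1$ turns $\sum_i \mathrm{Sec}_\lambda^{g_k,\sigma}(\speed_k,W_i)$ into $\mathrm{Ric}_\lambda^{g_k,\sigma}(\speed_k)\geq 1/r^2$, and an elementary integration of $\dot f_j^{\,2}-f_j^{\,2}/r^2$ over $[jT/(m+1),(j+1)T/(m+1)]$ gives
\[
\sum_{i=1}^{n-1}(\mathrm{d}^2 S_{k,\lambda})_\gamma[\zeta_{ij},\zeta_{ij}]\leq \frac{n-1}{2T(m+1)r^2}\bigl[(m+1)^2\pi^2 r^2-T^2\bigr],
\]
which is strictly negative by the assumption $T>\pi r(m+1)$. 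Hence for each $j$ there exists $i_j$ with $(\mathrm{d}^2S_{k,\lambda})_\gamma[\zeta_{i_jj},\zeta_{i_jj}]<0$.

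The hard part, and the only step that is not a routine computation, is to upgrade these $m+1$ isolated negative-direction vectors to an $(m+1)$-dimensional subspace on which the Hessian is negative \emph{definite}. The loop components $W_{i_j}^{f_j}$ have disjoint supports in $t$, so they are manifestly linearly independent; the difficulty is that the corrections $h_{ij}\speed_k$ and $\tau_{ij}$, arising from the ODE of Remark~\ref{remark}, need not be supported on the corresponding subinterval and may create cross terms in $(\mathrm{d}^2S_{k,\lambda})_\gamma[\zeta_{i_jj},\zeta_{i_{j'}j'}]$ for $j\neq j'$. I would handle this the same way as for the classical Morse index theorem: observe that the $W_{i_j}^{f_j}$–components being $L^2$-orthogonal in the perpendicular bundle kills the $\mathrm{Sec}_\lambda^{g_k,\sigma}$-cross terms, while the tangential/real corrections contribute only through the perfect square $\big[g_k(\dot V,\speed_k)-\tfrac{\tau}{T}\sqrt{\lambda+\tfrac12}\big]^2$ from~\eqref{hess}, which by the ODE defining $(h_{ij},\tau_{ij})$ vanishes identically. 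A careful bookkeeping (this is the content of \cite[Lemma~14]{assenza}) then shows that the Gram matrix of $(\mathrm{d}^2S_{k,\lambda})_\gamma$ on $\operatorname{span}\{\zeta_{i_0 0},\dots,\zeta_{i_m m}\}$ is diagonal with strictly negative entries, producing an $(m+1)$-dimensional negative-definite subspace and the desired contradiction with $\mu(\gamma)=m$.
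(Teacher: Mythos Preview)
Your proposal is correct and follows essentially the same approach as the paper: the contradiction setup, the choice of the frame $W_1,\dots,W_{n-1}$ solving $\dot V=A^{g_k,\sigma}V$, the sine bumps $f_j$ on the $m+1$ subintervals, the summation over $i$ turning the sectional term into $\mathrm{Ric}_\lambda^{g_k,\sigma}$, and the final appeal to \cite[Lemma~14]{assenza} for the negative-definite $(m+1)$-dimensional subspace are all exactly as in the paper. If anything, you spell out more explicitly than the paper does why the cross terms vanish (disjoint supports for the perpendicular parts, linearity of the ODE of Remark~\ref{remark} for the tangential correction), which is precisely the content of the cited lemma.
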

Let us point out that the inequality in the statement of Lemma~\ref{bonnetmayerslemma} is sharp, as illustrated by the example of the round $2$–sphere or the flat $2$–torus endowed with a non–identically zero constant magnetic function.

\subsection{Proof of Theorem \ref{thm_existence} (and Theorem~\ref{TheroemA1})}
The final ingredient we need is the existence of a contractible zero of $\alpha_{k,\lambda}$ for almost every $\lambda$ approaching $0$ from below.
\begin{lemma} \label{struwelemma}
There exists $\lambda_0 > 0$ and a subset $J \subseteq \left(- \lambda_0,  0 \right)$ of full Lebesgue measure such that for every $\lambda \in J$, the set $\mathcal{Z}^1(\alpha_{k,\lambda}) \cap \M_0$ is non-empty.
\end{lemma}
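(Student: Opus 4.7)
The plan is to implement a Struwe-type monotonicity argument around the constant loops in $\M_0$, following the scheme of~\cite{Contreras2,Merry,lus-fet} refined in~\cite{assenza}. There are three main steps: (a) build a mountain-pass geometry for the local primitives of $\alpha_{k,\lambda}$, uniformly for $\lambda$ in a small interval $(-\lambda_0, 0)$; (b) introduce the associated minimax value $c(\lambda)$, show it is monotone in $\lambda$, and deduce its differentiability on a full-measure subset $J$; (c) at every $\lambda \in J$, apply Struwe's trick to extract a Palais--Smale sequence with controlled period converging to a vanishing point of Morse index at most one.

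For step~(a), fix a point $\bar p \in M$, let $\bar x \in \Lambda_0$ be the associated constant loop, and use the local primitive $S_{k,\lambda}(x,T) = Q_{k,\lambda}(x,T) + M^\sigma(x)$ of $\alpha_{k,\lambda}$ on $B_{\bar x}(r)\times(0,+\infty)$. I would exhibit, uniformly for $\lambda$ in some interval $(-\lambda_0, 0)$: a topological sphere $\Sigma$ around $(\bar x, T_0)$ on which $S_{k,\lambda} \ge \delta > 0$ (using the coercive behaviour of $Q_{k,\lambda}$ near the constants together with the Stokes-type bound on $M^\sigma$ by the enclosed magnetic area), and a point $\gamma_1 \in \M_0$ on the other side of $\Sigma$ with $S_{k,\lambda}(\gamma_1) < 0$ (obtained by taking a small loop enclosing sufficient $g_k$-area and tuning its period). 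For step~(b), let $\mathcal{F}$ denote the class of continuous paths $u:[0,1]\to\M_0$ from $(\bar x, T_0)$ to $\gamma_1$, and set
\[
c(\lambda) \defeq \inf_{u \in \mathcal{F}}\ \max_{s \in [0,1]} S_{k,\lambda}(u(s)) \ge \delta .
\]
The explicit dependence $\partial_\lambda S_{k,\lambda} = T > 0$ makes $\lambda \mapsto c(\lambda)$ monotone non-decreasing; Lebesgue's theorem then yields a full-measure subset $J \subseteq (-\lambda_0, 0)$ of differentiability points.

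For step~(c), fix $\lambda \in J$. The existence of $c'(\lambda)$ combined with $\partial_\lambda S_{k,\lambda} = T$ forces near-minimizing paths $u_n \in \mathcal{F}$ for $c(\lambda)$ to be supported, near their maximum, on loops with periods bounded above by $c'(\lambda) + 1$. A pseudogradient deformation built from the local primitives then produces a Palais--Smale sequence $\gamma_n = (x_n, T_n) \in \M_0$ for $\alpha_{k,\lambda}$ at level $c(\lambda)$ with the same period bound; Lemma~\ref{lowerboundT} gives $T_n \ge T_I > 0$, and the near-critical version of the relation $\mathcal{E}_k(x_n) = (\tfrac12 + \lambda) T_n^2$ bounds $\mathcal{E}_k(x_n)$ as well. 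Standard compactness for $(g_k,\sigma)$-geodesic loops of bounded period yields subconvergence to a vanishing point $\gamma_\infty \in \mathcal{Z}(\alpha_{k,\lambda}) \cap \M_0$, and the one-parameter nature of $\mathcal{F}$ yields $\mu(\gamma_\infty) \le 1$ by the classical minimax Morse-index estimate, placing $\gamma_\infty$ in $\mathcal{Z}^1(\alpha_{k,\lambda}) \cap \M_0$.

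The main obstacle is step~(c). Below the Mañé critical value, $\alpha_{k,\lambda}$ fails the Palais--Smale condition in general, and without a period bound a PS sequence can drift to the infinite-period end of $\M_0$; Struwe's trick resolves this precisely at differentiability points of $c$, but the construction of the pseudogradient from the local primitives together with the careful control of the magnetic contribution is delicate. That said, this entire machinery is developed in~\cite{assenza} and adapts without modification to the present setting, since the Jacobi--Maupertuis reparametrization has already reduced the electromagnetic problem to a purely magnetic one on $(M, g_k, \sigma)$.
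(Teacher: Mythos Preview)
Your plan is the same Struwe monotonicity argument the paper carries out in Appendix~\ref{Appendix}, and the skeleton (mountain-pass near constants, monotone minimax value, a.e.\ differentiability, period-bounded PS sequence, index~$\le 1$) is correct. Two places in your sketch diverge from the paper's execution and need tightening. First, you run the minimax over paths in all of $\M_0$ while using only a \emph{local} primitive on $B_{\bar x}(r)\times(0,+\infty)$, so $S_{k,\lambda}(u(s))$ need not be defined along an admissible path; the paper resolves this by case distinction, using the global primitive $A_{k,\lambda}$ with the class $\Gamma_\lambda=\{\varphi:\varphi(0)\in M^+,\ \varphi(1)\in\{A_{k,\lambda}<0\}\}$ when $\sigma$ is weakly exact, and the path-variation $\Delta\alpha_{k,\lambda}$ together with a nontrivial class $[a]\in\pi_2(M)$ otherwise. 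Second, you invoke Lemma~\ref{lowerboundT} for the lower period bound on the PS sequence, but that lemma is stated and proved for exact zeros of $\alpha_{k,\lambda}$; the paper instead argues directly that $T_n\to 0$ would force $\mathcal{E}_k(x_n)=o(T_n^2)$ and hence $A_{k,\lambda}(\gamma_n)=o(T_n)\to 0$, contradicting $A_{k,\lambda}(\gamma_n)\to u(\lambda)>0$. Neither point is fatal to your outline, but both require the more careful setup you defer to \cite{assenza}.
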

As mentioned several times, this result follows from Struwe’s monotonicity argument~\cite{Struwe}, adapted to the minimax geometry of $\alpha_{k,\lambda}$ arising near the set of constant loops. Since this construction is standard and has been employed previously by several authors, we have decided to include it in Appendix~\ref{Appendix}.

We are now ready to prove Theorem~\ref{thm_existence}.

\begin{proof}[Proof of Theorem \ref{thm_existence} (and Theorem \ref{TheroemA1})]
Let $k\in (e_0,c]$ be such that $\mathrm{Ric}^{g,\sigma,U}_k > 0$, i.e. $\mathrm{Ric}_0^{g_k, \sigma} > 0$. By continuity, we can find an open interval $I_0$ containing $0$ and a constant $C > 0$ such that
\[
\mathrm{Ric}_\lambda^{g_k, \sigma} \geq \frac{1}{C^2}, \quad \forall \, \lambda \in I_0.
\]
By Lemma \ref{struwelemma}, we obtain sequences $\lambda_n \in J \cap I_0$ and $\gamma_n = (x_n, T_n) \in \mathcal{Z}^1(\alpha_{k,\lambda_n}) \cap \M_0$ such that $\lambda_n \nearrow 0$ and each $\gamma_n$ satisfies:
\[
\begin{cases}
(\mathrm{d}\mathcal{E}_k)_{x_n} = T_n (\Theta^\sigma)_{x_n}, \\
\frac{1}{2} + \lambda_n - \frac{\mathcal{E}_k(x_n)}{T_n^2} = 0.
\end{cases}
\]
By Lemmas \ref{lowerboundT} and \ref{bonnetmayerslemma}, the sequence $T_n$ is uniformly bounded and bounded away from zero. Thus, by Ascoli-Arzel\`a, up to a subsequence, $\gamma_n$ converges uniformly to $\bar{\gamma} = (\bar{x}, \bar{T})$. By continuity, $\bar{\gamma}$ satisfies:
\[
\begin{cases}
(\mathrm{d}\mathcal{E}_k)_{\bar{x}} = \bar{T} (\Theta^\sigma)_{\bar{x}}, \\
\frac{1}{2} - \frac{\mathcal{E}_k(\bar{x})}{\bar{T}^2} = 0,
\end{cases}
\]
i.e., $\bar{\gamma} \in \mathcal{Z}(\alpha_{k,0}) \cap \M_0$. Composing $\bar{\gamma}$ with the reparametrization $s(t) = \int_0^t 2(k-U(\gamma(\tau))) \mathrm{d}\tau$, we obtain a contractible solution of \eqref{Newtonequation} with energy $k$, concluding the proof of Theorem~\ref{thm_existence}. Finally, if $\sigma$ is nowhere vanishing and $U$ small with respect to the $C^2$ norm, then $c>e_0$ and by Theorem \ref{thm_positivity}, the value $\nu_{\mathrm{Ric}}>e_0$. By setting,
\[e_0<\nu_0 < \min\{ c, \nu_{\mathrm{Ric}}\},\]
Theorem \ref{TheroemA1} follows.
\end{proof}

\subsection*{Acknowledgment}
V.A was supported by the Instituto Serrapilheira. G.T. is partially funded by the CRC/TRR 191 ``Symplectic structures in Geometry, Algebra and Dynamics'' and by the DFG project 566804407 ``Symplectic Dynamics, Celestial Mechanics and Magnetism''. The groundwork for this article was carried out during the CIME School on \emph{Symplectic Dynamics and Topology}, held in Cetraro in June 2025. The authors thank the organizers for their hospitality and the stimulating atmosphere. The authors also wish to thank L.~Asselle, G.~Benedetti and L.~Macarini for their valuable suggestions.
 
\appendix
\section{Struwe monotonicity argument}\label{Appendix}
In this appendix we outline the Struwe’s monotonicity argument
used in the proof of Theorem~\ref{thm_existence}. We discuss in detail the cases in which the magnetic form $\sigma$ is weakly exact, and briefly explain how to adapt the framework to the non–weakly exact case.

\subsection{Weakly exact case}
In this subsection we assume that the 2-form $\sigma$ is weakly exact. 
This hypothesis ensures the existence of a global primitive $M^\sigma$ of $\Theta^\sigma$ on $\Lambda_0$, defined by
\[
M^\sigma(x) := \int (D_x)^* \sigma,
\]
where $D_x$ is a smooth capping disk for the loop $x$. 
Since $\sigma$ integrates to zero over all spheres, the definition above is independent of the choice of $D_x$. 
Therefore, one obtains a global primitive $A_{k,\lambda} : \mathcal{M}_0 \to \mathbb{R}$ of $\alpha_{k,\lambda}$ given by
\begin{equation} \label{globalprimitive}
A_{k,\lambda}(x,T) := Q_{k,\lambda}(x,T) + M^\sigma(x).
\end{equation}

Within this variational framework, the Mañé critical value $c$ can then be characterized as

\begin{equation} \label{manevar}
c:= \inf \left\{ k \geq e_0 \,\middle|\, A_{k,0}(x,T) > 0 \right\}.
\end{equation}
With a slight abuse of notation, we identify $M$ with the subset of $\Lambda_0$ consisting of constant loops, and set $M^+ := M \times (0, +\infty)$ as its counterpart in $\mathcal{M}_0$. If $k \in (e_0, c)$, then, by the definition \eqref{manevar} of $c$ and by the continuity of $A_{k,\lambda}$, there exists $\lambda_0 \in \left(0, \tfrac{1}{2} \right)$ such that for every $\lambda \in (-\lambda_0, \lambda_0)$, the following set is non-empty:
\[
\Gamma_{\lambda} := \left\{ \varphi : [0,1] \to \mathcal{M}_0 \,\middle|\, \varphi(0) \in M^+, \ \varphi(1) \in \left\{ A_{k,\lambda} < 0 \right\} \right\} \neq \emptyset.
\]
We define the minimax value function $u : (-\lambda_0, \lambda_0) \to \mathbb{R}$ by
\[
u(\lambda) := \inf_{\varphi \in \Gamma_\lambda} \max_{s \in [0,1]} A_{k,\lambda}(\varphi(s)).
\]

The mountain pass structure underlying this construction is summarized in the following lemma.

\begin{lemma} \label{minimaxvalue}
The function $u$ is monotone non-decreasing. Moreover, there exists a constant $D > 0$ such that $u > D$.
\begin{proof}
The monotonicity of $u$ follows directly from the fact that $A_{k,\lambda}$ is monotone non-decreasing with respect to the parameter $\lambda$. Observe that for every $\lambda \in (-\lambda_0, \lambda_0)$ and for every $(x,T) \in \mathcal{M}_0$, the following inequality holds:
\begin{equation} \label{ineQ}
Q_{k,\lambda}(x,T) \geq 2\sqrt{\tfrac{1}{2} + \lambda} \sqrt{\mathcal{E}_k(x)} \geq 2\sqrt{\tfrac{1}{2} - \lambda_0} \sqrt{\mathcal{E}_k(x)}.
\end{equation}
Moreover, if $x$ is entirely contained in an open subset of $M$ diffeomorphic to a disk, then by \cite[Lemma~7.1]{abbo2013} we also have:
\begin{equation} \label{ineM}
\left| M^\sigma(x) \right| \leq \frac{\| Y^{g_k, \sigma} \|_{\infty,k}}{2} \mathcal{E}_k(x),
\end{equation}
which implies:
\[
A_{k,\lambda}(x,T) \geq 2\sqrt{\tfrac{1}{2} - \lambda_0} \sqrt{\mathcal{E}_k(x)} - \frac{\| Y^{g_k} \|_{\infty,k}}{2} \mathcal{E}_k(x).
\]
From the above inequality and the definition of $\Gamma_\lambda$, we can deduce that for every sufficiently small $\delta > 0$, it holds:
\[
\varphi([0,1]) \cap \mathcal{E}_k^{-1}(\delta) \neq \emptyset, \quad \forall \varphi \in \Gamma_\lambda.
\]
The statement follows.
\end{proof}
\end{lemma}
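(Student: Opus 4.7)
The plan is to treat the two assertions separately. For the monotonicity of $u$, I observe that $A_{k,\lambda}(x,T) = Q_{k,\lambda}(x,T) + M^\sigma(x)$ depends on $\lambda$ only through the additive term $\lambda T$, and since $T>0$, we have $A_{k,\lambda_1} \leq A_{k,\lambda_2}$ pointwise on $\mathcal{M}_0$ whenever $\lambda_1 \leq \lambda_2$. In particular the negative sublevel set shrinks as $\lambda$ increases, giving $\Gamma_{\lambda_2} \subseteq \Gamma_{\lambda_1}$. Combining the pointwise ordering of the functional with this inclusion of admissible paths,
\[
u(\lambda_1) = \inf_{\varphi \in \Gamma_{\lambda_1}} \max_s A_{k,\lambda_1}(\varphi(s)) \leq \inf_{\varphi \in \Gamma_{\lambda_2}} \max_s A_{k,\lambda_2}(\varphi(s)) = u(\lambda_2).
\]

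For the uniform positive lower bound, the strategy is to exhibit a positive mountain of $\lambda$-uniform height that every admissible path must cross. I would first apply AM--GM to the two summands of $Q_{k,\lambda}$ to obtain the $\lambda$-uniform estimate $Q_{k,\lambda}(x,T) \geq a\sqrt{\mathcal{E}_k(x)}$ with $a := 2\sqrt{1/2-\lambda_0}$. Next, for loops $x$ of sufficiently small $\mathcal{E}_k$, and hence contained in a geodesically convex ball, one can bound $|M^\sigma(x)| \leq b\,\mathcal{E}_k(x)$ by capping $x$ within such a ball and estimating the integral of $\sigma$ over the capping disk using its $L^\infty$-norm together with an isoperimetric-type inequality (this is the standard short-loop estimate of \cite[Lemma~7.1]{abbo2013}). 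Combining the two yields
\[
A_{k,\lambda}(x,T) \geq a\sqrt{\mathcal{E}_k(x)} - b\,\mathcal{E}_k(x),
\]
which is strictly positive for $0 < \mathcal{E}_k(x) < (a/b)^2$. Choosing $\delta>0$ small enough for the short-loop estimate to apply and with $\sqrt{\delta} < a/b$, the constant $D := a\sqrt{\delta} - b\delta$ is a $\lambda$-uniform positive lower bound for $A_{k,\lambda}$ on the level set $\mathcal{E}_k^{-1}(\delta)$.

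The final step is to show that every $\varphi \in \Gamma_\lambda$ meets $\mathcal{E}_k^{-1}(\delta)$. At $s=0$ the loop $\varphi(0) \in M^+$ is constant, so $\mathcal{E}_k(\varphi(0)) = 0$. At $s=1$, if $\mathcal{E}_k(\varphi(1)) < \delta$ the mountain estimate gives $A_{k,\lambda}(\varphi(1)) > 0$, contradicting $\varphi(1) \in \{A_{k,\lambda}<0\}$, so $\mathcal{E}_k(\varphi(1)) \geq \delta$. By continuity of $\mathcal{E}_k \circ \varphi$ and the intermediate value theorem there exists $s_0$ with $\mathcal{E}_k(\varphi(s_0)) = \delta$, whence $\max_s A_{k,\lambda}(\varphi(s)) \geq D$; taking the infimum over $\Gamma_\lambda$ concludes.

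The only non-routine ingredient is the short-loop bound $|M^\sigma(x)| \leq b\,\mathcal{E}_k(x)$, which I expect to be the main obstacle. Its proof goes through a local primitive constructed from a capping disk lying inside a small geodesic chart, combined with the weak exactness of $\sigma$ to ensure that this local primitive agrees with the global $M^\sigma$ on the contractible component. Once this bound is in place, the remaining steps are standard mountain-pass bookkeeping.
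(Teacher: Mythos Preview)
Your proposal is correct and follows essentially the same route as the paper: the AM--GM estimate on $Q_{k,\lambda}$, the short-loop bound $|M^\sigma(x)|\le b\,\mathcal{E}_k(x)$ from \cite[Lemma~7.1]{abbo2013}, and the intermediate-value crossing of $\mathcal{E}_k^{-1}(\delta)$ are exactly the ingredients the paper uses. If anything, your treatment of monotonicity is slightly more careful, since you make explicit the inclusion $\Gamma_{\lambda_2}\subseteq\Gamma_{\lambda_1}$ that the paper leaves implicit.
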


A pseudo-gradient for \( A_{k,\lambda} \) that leaves the set \( \Gamma_\lambda \) invariant can be constructed as follows. Choose a function \( h_{\lambda}: \mathbb{R} \to [0,1] \) with \( h_{\lambda}' \geq 0 \) such that
\[
\begin{cases}
  h_\lambda(t) = 0, & t \in (-\infty, \tfrac{u(\lambda)}{2}] \\
h_\lambda(t) = 1 & t \in [\tfrac{u(\lambda)}{2}, +\infty)  
\end{cases}
\]

Write for simplicity $\nabla A_{k,\lambda} = \mathrm{grad}^{g_{\mathcal{M},k}}A_{k,\lambda}$, and define the vector field \( X_{k,\lambda} \) by
\begin{equation} \label{Pseudograd}
X_{k,\lambda} = -\frac{(h_\lambda \circ A_{k,\lambda})}{\sqrt{1 + \vert \nabla A_{k,\lambda} \vert_{\mathcal{M},k}^2}} \nabla A_{k,\lambda}.
\end{equation}
By \cite[Lemma~5.7]{Merry} and \cite[Remark~1.4]{abbo2013}, the positive semi-flow
\[
F_{k,\lambda}: \mathcal{M}_0 \times [0, +\infty) \to \mathcal{M}_0
\]
obtained by integrating \( X_{k,\lambda} \) is complete. We summarize the properties of \( F_{k,\lambda} \) that will be needed in our argument in the following lemma.

\begin{lemma} \label{gradientproperties}
Let \( \eta: [0, +\infty) \to \mathcal{M}_0 \) be a flow line of \( F_{k,\lambda} \), and write \( \eta(s) = (x_s, T_s) \). The following hold:
\begin{itemize}
    \item[(i)] For every \( s \geq 0 \), it holds that
    \[
    \frac{\mathrm{d}}{\mathrm{d}s} A_{k,\lambda}(\eta(s)) \leq 0.
    \]
    
    \item[(ii)] If for some \( s_* \geq 0 \), we have \( \eta(s_*) \in \{ A_{k,\lambda} \geq \tfrac{u(\lambda)}{2} \} \), then
    \begin{equation} \label{boundX_{klambda}}
    \frac{\mathrm{d}}{\mathrm{d}s} A_{k,\lambda}(\eta(s)) = -\frac{\vert \nabla A_{k,\lambda} \vert_{\mathcal{M},k}^2}{\sqrt{1 + \vert \nabla A_{k,\lambda} \vert_{\mathcal{M},k}^2}}, \quad \forall s \in [0, s_*].
    \end{equation}

    \item[(iii)] For every \( s \geq 0 \), it holds that
    \[
    T_s \leq T_0 + \sqrt{s \left( A_{k,\lambda}(\eta(0)) - A_{k,\lambda}(\eta(s)) \right)}.
    \]
\end{itemize}

\begin{proof}
Points (i) and (ii) follow from the fact that \( h_\lambda \) is non-negative and identically equal to 1 on the region \( \{ A_{k,\lambda} \geq \tfrac{u(\lambda)}{2} \} \). By the definition of \( X_{k,\lambda} \) and the Cauchy–Schwarz inequality, we also obtain:
\begin{align*}
    s \left( A_{k,\lambda}(\eta(0)) - A_{k,\lambda}(\eta(1)) \right)
    &= -s \int_0^s (\mathrm{d}A_{k,\lambda})_{\eta(\tau)} \left( \frac{\mathrm{d}}{\mathrm{d}\tau} \eta(\tau) \right) \, \mathrm{d}\tau \\
    &\geq s \int_0^s \left\vert \frac{\mathrm{d}}{\mathrm{d}\tau} \eta(\tau) \right\vert_{{\mathcal{M},k}}^2 \, \mathrm{d}\tau \\
    &\geq \left( \int_0^s \left\vert \frac{\mathrm{d}}{\mathrm{d}\tau} \eta(\tau) \right\vert_{{\mathcal{M},k}} \, \mathrm{d}\tau \right)^2 \\
    &\geq d_{\mathcal{M},k}(u(s), u(0))^2.
\end{align*}
Here, \( d_{\mathcal{M},k} \) denotes the distance on \( \mathcal{M} \) induced by the metric \( g_{\mathcal{M}} \). Point (iii) then follows by observing:
\[
T_s \leq \vert T_s - T_0 \vert + T_0 \leq T_0 + d_{\mathcal{M},k}(\eta(s), \eta(0)).
\]
\end{proof}
\end{lemma}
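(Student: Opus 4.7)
My plan is to address the three properties in sequence, each building on the previous one. For part (i), I would simply compute the derivative of $A_{k,\lambda}\circ\eta$ via the chain rule: since $\eta$ is an integral curve of $X_{k,\lambda}$, the definition \eqref{Pseudograd} gives
\[
\frac{\mathrm{d}}{\mathrm{d}s}A_{k,\lambda}(\eta(s)) = (\mathrm{d}A_{k,\lambda})_{\eta(s)}\bigl(X_{k,\lambda}(\eta(s))\bigr)
= -\frac{h_\lambda\!\bigl(A_{k,\lambda}(\eta(s))\bigr)\,\vert\nabla A_{k,\lambda}\vert^2_{\mathcal{M},k}}{\sqrt{1+\vert\nabla A_{k,\lambda}\vert^2_{\mathcal{M},k}}},
\]
which is non-positive because $h_\lambda$ takes values in $[0,1]$ and the remaining factor is manifestly non-negative. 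This is (i).

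For part (ii), I would exploit the monotonicity of $A_{k,\lambda}\circ\eta$ just established. If $A_{k,\lambda}(\eta(s_*))\ge u(\lambda)/2$, then for every $s\in[0,s_*]$ we have $A_{k,\lambda}(\eta(s))\ge A_{k,\lambda}(\eta(s_*))\ge u(\lambda)/2$ by (i). By the defining property of the cutoff $h_\lambda$, this forces $h_\lambda\bigl(A_{k,\lambda}(\eta(s))\bigr)=1$ throughout $[0,s_*]$, and substituting into the expression derived in (i) yields the stated identity.

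For part (iii), I would use the orthogonal splitting underlying the product metric $g_{\mathcal{M},k}=g_{\Lambda,k}\oplus\mathrm{d}T^2$: the projection $\pi_T\colon\mathcal{M}\to(0,+\infty)$ is $1$-Lipschitz with respect to $d_{\mathcal{M},k}$, so $\vert T_s-T_0\vert\le d_{\mathcal{M},k}(\eta(s),\eta(0))$. To bound this distance, I would establish the pointwise comparison $\vert X_{k,\lambda}\vert^2_{\mathcal{M},k}\le-(\mathrm{d}A_{k,\lambda})(X_{k,\lambda})$, which follows immediately from $h_\lambda\le 1$ and $\sqrt{1+\vert\nabla A_{k,\lambda}\vert^2_{\mathcal{M},k}}\ge 1$, since both sides are explicit rational expressions in $\vert\nabla A_{k,\lambda}\vert^2_{\mathcal{M},k}$ differing exactly by this factor. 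Integrating along $\eta$ gives $\int_0^s\vert\eta'(\tau)\vert^2_{\mathcal{M},k}\,\mathrm{d}\tau\le A_{k,\lambda}(\eta(0))-A_{k,\lambda}(\eta(s))$, after which the Cauchy--Schwarz chain already sketched in the lemma's proof produces
\[
d_{\mathcal{M},k}(\eta(s),\eta(0))\le\sqrt{s\bigl(A_{k,\lambda}(\eta(0))-A_{k,\lambda}(\eta(s))\bigr)}.
\]
Combining this with the Lipschitz bound on $T$ and with $T_s\le T_0+\vert T_s-T_0\vert$ gives (iii).

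All three parts are essentially direct computations, and I do not anticipate any serious obstacle. The only mildly nontrivial observation is the pointwise bound $\vert X_{k,\lambda}\vert^2_{\mathcal{M},k}\le-(\mathrm{d}A_{k,\lambda})(X_{k,\lambda})$, which is precisely the reason the normalizing factor $\sqrt{1+\vert\nabla A_{k,\lambda}\vert^2_{\mathcal{M},k}}$ appears in the denominator of \eqref{Pseudograd}; this is the standard device that makes the pseudo-gradient complete while retaining the calibration needed for the displacement estimate in (iii).
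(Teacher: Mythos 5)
Your proposal is correct and follows the same approach as the paper: the chain rule plus the sign and cutoff properties of $h_\lambda$ for parts (i) and (ii), and for part (iii) the pointwise calibration $\lvert X_{k,\lambda}\rvert_{\mathcal{M},k}^2 \le -(\mathrm{d}A_{k,\lambda})(X_{k,\lambda})$ followed by Cauchy--Schwarz and the $1$-Lipschitz property of the period projection for the product metric. Your write-up is actually slightly more explicit than the paper's, which leaves the pointwise pseudo-gradient inequality implicit inside the first line of its displayed chain.
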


Denote by $\mathrm{Crit}^1(A_{k,\lambda})$ the set of critical points of $A_{k,\lambda}$ whose Morse index is at most one. 
Lemma~\ref{struwelemma} follows from the following result. 

\begin{lemma}
There exists a subset $J \subseteq (-\lambda_0, \lambda_0)$ of full Lebesgue measure such that, for every $\lambda \in J$, the set $\mathrm{Crit}^1(A_{k,\lambda}) \cap \{ A_{k,\lambda} = u(\lambda)\} \neq \emptyset$. In particular, $\mathcal{Z}^1(\alpha_{k,\lambda})\cap\M_0\neq \emptyset$.
\begin{proof}
By Lemma~\ref{minimaxvalue}, the function \( u \) is monotone non-decreasing, which implies that it is differentiable on a set \( J \subseteq (-\lambda_0, \lambda_0) \) of full Lebesgue measure. Thus, for every \( \lambda \in J \), there exists a constant \( D_\lambda \) such that for every \( \lambda' \) sufficiently close to \( \lambda \), we have:
\begin{equation} \label{constantDlambda}
\vert u(\lambda) - u(\lambda') \vert \leq D_\lambda \vert \lambda - \lambda' \vert.
\end{equation}
Fix \( \lambda \in J \) and consider a sequence \( \lambda_n\subset (-\lambda_0,\lambda_0)\) such that \( \lambda_n \searrow \lambda \), and define \( \varepsilon_n := \lambda_n - \lambda \searrow 0 \). By the definition of \( u \), for each \( n \) there exists \( \varphi_n \in \Gamma_\lambda \) such that:
\[
\max_{s \in [0,1]} A_{k, \lambda_n}(\varphi_n(s)) \leq u(\lambda_n) + \varepsilon_n.
\]
Observe that for each \( n \), the following inclusion holds:
\[
\Gamma_{\lambda_n} \subseteq \Gamma_{\lambda_{n+1}} \subseteq \Gamma_\lambda.
\]
Now, if \( \gamma = (x, T) \in \varphi_n([0,1]) \) satisfies \( A_{k,\lambda}(\gamma) \geq u(\lambda) - \varepsilon_n \), then by \eqref{constantDlambda}:
\begin{equation*}
    T = \frac{A_{k,\lambda_n}(\gamma) - A_{k,\lambda}(\gamma)}{\varepsilon_n}
    \leq \frac{u(\lambda_n) - u(\lambda) + 2\varepsilon_n}{\varepsilon_n}
    \leq D_\lambda + 2,
\end{equation*}
and
\begin{equation*}
    A_{k,\lambda}(\gamma) \leq A_{k, \lambda_n}(\gamma) \leq u(\lambda_n) + \varepsilon_n \leq u(\lambda) + \varepsilon_n(D_\lambda + 1).
\end{equation*}
Using these estimates and points (i) and (iii) of Lemma~\ref{gradientproperties}, we conclude that for every \( \gamma \in \varphi_n([0,1]) \),
\[
F_{k,\lambda}(\gamma, [0,1]) \subset \{ A_{k,\lambda} < u(\lambda) - \varepsilon_n \} \cup \mathcal{C}_n,
\]
where we define:
\[
\mathcal{C}_n := \left\{ u(\lambda) - \varepsilon_n \leq A_{k,\lambda} \leq u(\lambda) + \varepsilon_n(D_\lambda + 2) \right\}
\cap \left\{ T < D_\lambda + 2 + \sqrt{\varepsilon_n(D_\lambda + 2)} \right\}.
\]

We claim that there exists a sequence \( \gamma_n \in \mathcal{C}_n \) such that:
\[
\vert (\mathrm{d}A_{k,\lambda})_{\gamma_n} \vert_{\mathcal{M},k} \to 0, \quad \text{and} \quad A_{k,\lambda}(\gamma_n) \to u(\lambda).
\]
We argue by contradiction. Suppose instead that \( \vert \nabla A_{k,\lambda} \vert_{\mathcal{M},k} \) is bounded away from zero on \( \mathcal{C}_n \) for large \( n \). Then, there exists \( \delta > 0 \) such that:
\begin{equation} \label{boundgrad}
\frac{\vert \nabla A_{k,\lambda} \vert_{\mathcal{M},k}^2}{\sqrt{1 + \vert \nabla A_{k,\lambda} \vert_{\mathcal{M},k}^2}} \geq \delta, \quad \forall \gamma \in \mathcal{C}_n.
\end{equation}
If \( \gamma \in \varphi_n([0,1]) \) and \( F_{k,\lambda}(\gamma, 1) \in \{ A_{k,\lambda} \geq u(\lambda) \} \), then by Lemma~\ref{gradientproperties} (ii) and \eqref{boundgrad}, we obtain
\begin{align*}
A_{k,\lambda}(F_{k,\lambda}(\gamma,1)) 
&= A_{k,\lambda}(\gamma) - \int_0^1 \frac{d}{d\tau} A_{k,\lambda}(F_{k,\lambda}(\gamma, \tau)) \, d\tau \\
&\leq u(\lambda) + \varepsilon_n(D_\lambda + 1) - \delta.
\end{align*}
For \( n \) sufficiently large, this contradicts the definition of \( u \), proving the claim.

It remains to show that the sequence \( \gamma_n = (x_n, T_n) \) has a convergent subsequence in \( \mathcal{M}_0 \). By \cite[Lemma~5.3]{abbo2013}, this is true provided that \( T_n \) is uniformly bounded and bounded away from zero. The former is immediate from \( \gamma_n \in \mathcal{C}_n \). For the latter, since \( \vert (\mathrm{d}A_{k,\lambda})_{\gamma_n} \vert_{\mathcal{M},k} \to 0 \), we have:
\begin{equation} \label{infinitesimal}
\frac{1}{2} + \lambda_n - \frac{\mathcal{E}_k(x_n)}{T_n^2} = \beta_n,
\end{equation}
with \( \beta_n \to 0 \) and \( \lambda_n \to \lambda \). Then,
\[
\mathcal{E}_k(x_n) = \left( \frac{1}{2} + \lambda_n - \beta_n \right) T_n^2.
\]
If \( T_n \to 0 \), then \( \mathcal{E}_k(x_n) = o(T_n^2) \), and from \eqref{ineM}, we deduce:
\[
\left\vert A_{k,\lambda}(x_n, T_n) \right\vert = o(T_n).
\]
Since \( A_{k,\lambda}(x_n, T_n) \to u(\lambda) > 0 \), we conclude that \( T_n \) is bounded away from zero.
Therefore, \( \gamma_n \to \bar{\gamma} \in \mathrm{Crit}(A_{k,\lambda}) \cap \{ A_{k,\lambda} = u(\lambda) \} \), and standard minimax theory (see \cite[Lemma~20]{assenza}) implies that \( \bar{\gamma} \) has Morse index at most one. Hence,
\[
\mathcal{Z}^1(\alpha_{k,\lambda}) \cap \mathcal{M}_0 \neq \emptyset.
\]
\end{proof}
\end{lemma}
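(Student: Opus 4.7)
The plan is to deploy Struwe's classical monotonicity trick. Since the minimax function $u$ is monotone non-decreasing (Lemma~\ref{minimaxvalue}), it is differentiable on a subset $J \subseteq (-\lambda_0,\lambda_0)$ of full Lebesgue measure by the Lebesgue differentiation theorem. I take this $J$ and fix $\lambda \in J$, aiming to produce a Palais--Smale sequence for $A_{k,\lambda}$ at level $u(\lambda)$ that stays compact in $\M_0$ and whose limit has Morse index at most one. This gives $\mathrm{Crit}^1(A_{k,\lambda}) \cap \{A_{k,\lambda} = u(\lambda)\} \neq \emptyset$, and hence $\mathcal{Z}^1(\alpha_{k,\lambda}) \cap \M_0 \neq \emptyset$ since critical points of a local primitive are precisely the vanishing points of $\alpha_{k,\lambda}$.

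The first step is to use the differentiability at $\lambda$ to obtain an a priori period bound on near-optimal paths. I would pick a sequence $\lambda_n \searrow \lambda$, set $\varepsilon_n = \lambda_n - \lambda$, and choose $\varphi_n \in \Gamma_{\lambda_n} \subseteq \Gamma_\lambda$ with $\max_s A_{k,\lambda_n}(\varphi_n(s)) \leq u(\lambda_n) + \varepsilon_n$. Since $Q_{k,\lambda}$ depends linearly on $\lambda$ with slope $T$, at every point $\gamma=(x,T)$ on such a path one has $T = (A_{k,\lambda_n}(\gamma) - A_{k,\lambda}(\gamma))/\varepsilon_n$. Combining this with the local Lipschitz estimate $u(\lambda_n) - u(\lambda) \leq D_\lambda\,\varepsilon_n$ coming from differentiability at $\lambda$, the high-energy portion of $\varphi_n$ (the set where $A_{k,\lambda} \geq u(\lambda) - \varepsilon_n$) is confined to a bounded range of $T$ depending only on $\lambda$, hence to a precompact region $\mathcal{C}_n \subset \M_0$.

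Next I would run the negative pseudo-gradient flow $F_{k,\lambda}$ from \eqref{Pseudograd} on each $\varphi_n$. By points (i) and (iii) of Lemma~\ref{gradientproperties}, the flow never increases $A_{k,\lambda}$ and the period grows by at most $\sqrt{s \cdot (\text{level drop})}$, so on a uniformly bounded time interval the flowed paths remain in a slightly enlarged compact slab. Suppose for contradiction that $\vert \nabla A_{k,\lambda} \vert_{\M,k}$ were bounded below by some $\delta>0$ on $\mathcal{C}_n$ for all large $n$; then by point (ii) of the same lemma the value of $A_{k,\lambda}$ drops by a definite amount along the flow, and after one unit of time one would obtain a path in $\Gamma_\lambda$ whose maximum is strictly below $u(\lambda)$, contradicting the definition of the minimax value. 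This produces a Palais--Smale sequence $\gamma_n = (x_n,T_n) \in \mathcal{C}_n$ with $\vert (\mathrm{d}A_{k,\lambda})_{\gamma_n} \vert_{\M,k} \to 0$ and $A_{k,\lambda}(\gamma_n) \to u(\lambda)$.

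The concluding step is compactness and the index estimate. The period $T_n$ is bounded above by membership in $\mathcal{C}_n$; a positive lower bound comes from the following: if $T_n \to 0$, the relation $\vert (\mathrm{d}A_{k,\lambda})_{\gamma_n} \vert \to 0$ forces $\mathcal{E}_k(x_n) = O(T_n^2)$, so the quadratic bound \eqref{ineM} on $M^\sigma$ yields $A_{k,\lambda}(\gamma_n) = o(T_n) \to 0$, contradicting $u(\lambda) > 0$ from Lemma~\ref{minimaxvalue}. With $T_n$ pinched between two positive constants, standard Palais--Smale compactness from \cite[Lemma~5.3]{abbo2013} delivers a limit $\bar\gamma \in \mathrm{Crit}(A_{k,\lambda}) \cap \{A_{k,\lambda} = u(\lambda)\}$, and the classical minimax index bound (cf.\ \cite[Lemma~20]{assenza}) gives $\mu(\bar\gamma) \leq 1$. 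The hardest part I expect is the a priori period bound on the high-energy slab: this is what converts the mere monotonicity of $u$ into genuine compactness, and it depends sensitively on the linearity of $Q_{k,\lambda}$ in $\lambda$ together with the Lipschitz slope at a differentiability point; the remaining pieces are cleaner applications of the standard minimax machinery.
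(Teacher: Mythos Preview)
Your proposal is correct and follows essentially the same route as the paper's proof: Struwe's monotonicity trick to get the full-measure set $J$, the linear-in-$\lambda$ identity $T=(A_{k,\lambda_n}-A_{k,\lambda})/\varepsilon_n$ combined with the Lipschitz slope at a differentiability point to confine the high-action slab to a compact region $\mathcal{C}_n$, the pseudo-gradient deformation via Lemma~\ref{gradientproperties} to produce a Palais--Smale sequence by contradiction, and finally the two-sided period bound (upper from $\mathcal{C}_n$, lower from $u(\lambda)>0$ and \eqref{ineM}) feeding into \cite[Lemma~5.3]{abbo2013} and the index estimate \cite[Lemma~20]{assenza}. The only cosmetic difference is that you state $\varphi_n\in\Gamma_{\lambda_n}\subseteq\Gamma_\lambda$ explicitly, which is exactly what the paper uses after noting the inclusion $\Gamma_{\lambda_n}\subseteq\Gamma_\lambda$.
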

\subsection{Non weakly exact case}
If $\sigma$ is not exact on the universal cover, then the critical value $c = +\infty$, and neither $\Theta^\sigma$ nor $\alpha_{k,\lambda}$ admits a globally defined primitive. In particular, the function $A_{k,\lambda}$ defined in \eqref{globalprimitive} is only locally defined in a neighborhood of $M^+$. Nevertheless, for any smooth path $\varphi:[0,1] \to \R$, we can still define the variation of $\alpha_{k,\lambda}$ along $\varphi$, denoted $\Delta \alpha_{k,\lambda} : [0,1] \to \R$, as
\[
\Delta \alpha_{k,\lambda}(\varphi)(s) := \int_0^s \varphi^* \alpha_{k,\lambda}.
\]
It is clear that if the image of $\varphi$ lies entirely within a domain where a local primitive $S_{k,\lambda}$ of $\alpha_{k,\lambda}$ is defined, then
\[
\Delta \alpha_{k,\lambda}(\varphi)(s) = S_{k,\lambda}(\varphi(s)) - S_{k,\lambda}(\varphi(0)).
\]

In this framework, the minimax geometry can be formulated as follows. First, recall the one-to-one correspondence
\[
\left\{ \varphi : S^2 \to M \right\} \longleftrightarrow \left\{ \varphi : [0,1] \to \Lambda_0 \ ; \ \varphi(0), \varphi(1) \in M \right\},
\]
which descends naturally to homotopy classes. For a nontrivial class $[a] \in \pi_2(M) \setminus \{0\}$, define
\[
\Gamma_{[a]} := \left\{ \varphi:[0,1] \to \M_0 \ ; \ \varphi(0),\varphi(1) \in M^+, \ \text{and} \ (\pi_{\Lambda} \circ \varphi) \in [a] \right\}.
\]

Fix a small constant $\delta > 0$, and let $\mathcal{V}_\delta = E^{-1}([0,\delta)) \times (0,+\infty)$. Define $A_{k,\lambda} : \mathcal{V}_\delta \to \R$ as in \eqref{globalprimitive}. For each $\varphi \in \Gamma_{[a]}$, define a primitive $S_{k,\lambda}(\varphi)$ of $\alpha_{k,\lambda}$ along $\varphi$ by
\[
S_{k,\lambda}(\varphi)(s) := \Delta \alpha_{k,\lambda}(\varphi)(s) + A_{k,\lambda}(\varphi(0)).
\]

Let $\lambda_0 < \frac{1}{2}$ and define the function $u : (-\lambda_0, \lambda_0) \to \R$ by
\[
u(\lambda) := \inf_{\varphi \in \Gamma_{[a]}} \max_{s \in [0,1]} S_{k,\lambda}(\varphi)(s).
\]
As in the weakly exact case, the function $u$ is monotone non-decreasing. Furthermore, since $\Delta \alpha_{k,\lambda}$ coincides with $A_{k,\lambda}$ for paths fully contained in $\mathcal{V}_\delta$, and because $[a] \neq 0$, a straightforward adaptation of the argument used in Lemma \ref{minimaxvalue} shows that $u > D$ for some constant $D > 0$.

As a pseudo-gradient, we may consider the vector field $X_{k,\lambda}$ obtained from \eqref{Pseudograd} by replacing $\nabla A_{k,\lambda}$ with $\sharp \alpha_{k,\lambda}$, the vector field on $\mathcal{M}_0$ dual to $\alpha_{k,\lambda}$ under the natural pairing between $T^* \M$ and $T \M$ induced by $g_{\M,k}$. Additionally, the term $(h_\lambda \circ A_{k,\lambda})$ is replaced with $(q_\lambda \circ \mathcal{E}_k)$, where $q$ is a smooth, monotone increasing function satisfying
\[
q_\lambda^{-1}\left((-\infty, \tfrac{\delta}{4}]\right) =  0, \quad \text{and} \quad q_\lambda^{-1}\left([\tfrac{\delta}{2}, +\infty)\right) = 1.
\]

The positive semi-flow $F_{k,\lambda}$ generated by integrating $X_{k,\lambda}$ is complete and preserves $\Gamma_{[a]}$. For every $\gamma \in \M_0$, if $\eta_\gamma:[0,+\infty) \to \M_0$ denotes the flow line of $F_{k,\lambda}$ starting at $\gamma$, then
\[
\Delta \alpha_{k,\lambda}(\eta_\gamma)(s) \leq 0.
\]
This inequality, together with the closedness of $\alpha_{k,\lambda}$, implies that for every $\varphi \in \Gamma_{[a]}$ and all $t \geq 0$,
\[
S_{k,\lambda}(F_{k,\lambda}(t, \varphi))(s) = S_{k,\lambda}(\varphi)(s) + \Delta \alpha_{k,\lambda}(\eta_{\varphi(s)})(t) \leq S_{k,\lambda}(\varphi)(s).
\]

Moreover, item (iii) of Lemma \ref{gradientproperties} also extends to this setting. With these tools in place, the proof of Lemma \ref{struwelemma} for the case where $\sigma$ is not weakly exact proceeds along the same lines as in the weakly exact case. Applying Struwe's monotonicity argument to the function $u$, we find that for every point $\lambda$ at which $u$ is differentiable, there exists a sequence $\gamma_n$ such that:
\[
\left| \alpha_{k,\lambda}(\gamma_n) \right|_{\M,k} \to 0, \quad \text{and} \quad \gamma_n \in \left\{ T \leq D_k + 2 + \sqrt{\varepsilon_n(D_k + 2)} \right\},
\]
for $\varepsilon_n = \lambda_n-\lambda \searrow 0$. In this case, the sequence $T_n$ is uniformly bounded above by construction and uniformly bounded away from zero, since for every $\varphi \in \Gamma_{[a]}$, the maximum of $S_{k,\lambda}(\varphi)$ occurs outside of $\mathcal{V}_{\delta_1}$ for some $\delta_1 \in (0, \delta)$.

Therefore, by \cite[Theorem~2.6]{lus-fet}, the sequence $\gamma_n$ converges (up to subsequence) to a point $\bar{\gamma} \in \mathcal{Z}(\alpha_{k,\lambda}) \cap \M_0$. Finally, the implication that $\mathcal{Z}^1(\alpha_{k,\lambda}) \cap \M_0 \neq \emptyset$ follows by small adaptation of the same argument used in the weakly exact case (see \cite[Lemma~25]{assenza} for further details).

\bibliographystyle{plain}
\bibliography{biblio}

\end{document}